\newtheorem{theorem}{Theorem}[section]
\newtheorem{lemma}[theorem]{Lemma}
\newtheorem{corollary}[theorem]{Corollary}
\theoremstyle{definition}
\newtheorem{definition}{Definition}[section]
\newtheorem{example}{Example}[section]
\newtheorem{assumption}{Assumption}[section]
\theoremstyle{remark} 
\newtheorem{remark}{Remark}[section]
\newcommand{\bN}{\mathbb{N}}
\newcommand{\bP}{\mathbb{P}}
\newcommand{\bR}{\mathbb{R}}
\newcommand{\cA}{\mathcal{A}}
\newcommand{\cF}{\mathcal{F}}
\newcommand{\cN}{\mathcal{N}}
\newcommand{\cO}{\mathcal{O}}
\newcommand{\abs}[1]{{\left| #1 \right|}}
\newcommand{\ceil}[1]{{\left\lceil #1 \right\rceil}}
\newcommand{\1}[1]{{\ensuremath{\mathbf{1}_{\left\{ #1 \right\}}}}}
\newcommand{\pr}[1]{\left( #1 \right)}
\newcommand{\Prob}[1]{\mathbb{P}\mspace{-1mu}\left( #1 \right)}
\newcommand{\E}[1]{{\ensuremath{\mathbb{E}}\mspace{-2mu}\left[#1\right]}}
\newcommand{\Var}[1]{{\ensuremath{\mathrm{Var}\!\left( #1 \!\right)}}}
\newcommand{\toDist}{\xrightarrow[]{\rm{d}}}
\newcommand{\approxDist}{\overset{\rm{d}}{\approx}}
\newcommand{\DlX}{\Delta_\ell X} 
\newcommand{\defeq}{\mathrel{\vcenter{\baselineskip0.5ex \lineskiplimit0pt
                     \hbox{\scriptsize.}\hbox{\scriptsize.}}}%
                     =}
\newcommand{\Seb}[1]{\textcolor{black}{#1}}
\newcommand{\Hak}[1]{\textcolor{black}{#1}}
\begin{document}

\begin{frontmatter}
\title{Central limit theorems for multilevel Monte Carlo methods}
%
\author[Chalmers]{H{\aa}kon Hoel\corref{cor1}}
\ead{hhakon@chalmers.se, haakonah1@gmail.com}
\cortext[cor1]{Corresponding author}
\author[EPFL]{Sebastian Krumscheid}
\ead{sebastian.krumscheid@epfl.ch}
%
\address[Chalmers]{Department of Mathematical Sciences, Chalmers
  University of Technology and University of Gothenburg, SE-412 96
  Gothenburg, Sweden}
\address[EPFL]{Calcul Scientifique et Quantification de l'Incertitude
  (CSQI), Institute of Mathematics, {\'E}cole Polytechnique
  F{\'e}d{\'e}rale de Lausanne, CH-1015 Lausanne, Switzerland}
%



\begin{abstract}
  In this work, we show that uniform integrability is not a necessary
  condition for central limit theorems (CLT) to hold for normalized
  multilevel Monte Carlo (MLMC) estimators and we provide near optimal
  weaker conditions under which the CLT is achieved. In particular, if
  the variance decay rate dominates the computational cost rate (i.e.,
  $\beta > \gamma$), we prove that the CLT applies to the
  standard (variance minimizing) MLMC estimator.
  For other settings where the CLT may not apply to the
  standard MLMC estimator, we propose an alternative estimator, called
  the mass-shifted MLMC estimator, to which the CLT always applies.
  This comes at a small efficiency loss:
  the computational cost of achieving mean square approximation
  error $\cO(\epsilon^2)$ is at worst a factor $\cO(\log(1/\epsilon))$
  higher with the mass-shifted estimator than with the standard one.
  \end{abstract}

\begin{keyword}
  Multilevel Monte Carlo, Central Limit Theorem
\end{keyword}

\end{frontmatter}


\section{Introduction}

The multilevel Monte Carlo (MLMC) method is a hierarchical sampling
method which in many settings improves the computational efficiency of
weak approximations by orders of magnitude.  The method was
independently introduced in the papers~\cite{MR1629093, MR2436856} for
the purpose of parametric integration and for approximations of
observables of stochastic differential equations, respectively.  MLMC
methods have since been applied with considerable success in a vast
range of stochastic problems, a collection of which can be found in
the overview~\cite{MR3349310}. In this work we present near optimal
conditions under which the normalized MLMC estimator converges in
distribution to a standard normal distribution. Our result has
applications in settings where the MLMC approximation error is
measured in terms of probability of failure~\eqref{eq:probFail}
%
%
rather than the classical mean square error.

\subsection{Main result}\label{sec:mainResult}

We consider the probability space $(\Omega, \cF, \bP)$ and let
$X \in L^2(\Omega)$ be a scalar random variable (r.~v.) for which we seek the
expectation $\E{X}$. Let
$\{X_\ell\}_{\ell=-1}^{\infty} \subset L^2(\Omega)$ be a sequence of r.~v.~satisfying the following:
\begin{assumption}\label{ass:mlmcRates}
  There exist rate constants $\alpha, \beta, \gamma >0$ with
  $\min(\beta, \gamma) \leq 2 \alpha$
  and a constant $c_\alpha > 0$ such that 
\begin{itemize}
 
 \item[(i)] $\abs{ \E{X - X_\ell} }   \leq c_\alpha 2^{-\alpha \ell}$ for all $\ell \in \bN_0 \defeq \bN \cup \{0\}$, 
 
 \item[(ii)] $V_0>0$ and $V_\ell \defeq \text{Var}(\DlX) = \cO_\ell(2^{-\beta \ell})$,
   
 \item[(iii)] $C_\ell \defeq \text{Cost}(\DlX) = \Theta_\ell\pr{2^{\gamma \ell}}$,
  
\end{itemize}
where $\DlX \defeq X_\ell-X_{\ell-1}$ with $X_{-1} \defeq 0$.
The notation $f(x_\ell) = \cO_\ell(y_\ell)$ means there exists a constant
$C >0$ such that $|f(x_\ell)| < C |y_\ell|$ for all $\ell \in \bN_0 := \bN \cup\{0\}$
and $f(x_\ell) = \Theta_\ell(y_\ell)$ means there exist
constants $C>c>0$ such that
$c|y_\ell| < |f(x_\ell)| < C |y_\ell|$ for all $\ell \in \bN_0$.
\end{assumption}

\begin{definition}[Variance minimizing
  MLMC estimator~\cite{MR3349310,MR1629093}]\label{def:mlmc}
  The MLMC estimator $\cA_{ML}\colon (0,\infty) \to L^2(\Omega)$
  applied to estimate the expectation of $X \in L^2(\Omega)$
  based on the collection of r.v.~$\{X_\ell\} \subset L^2(\Omega)$ satisfying
  Assumption~\ref{ass:mlmcRates} is defined by
  \[
    \cA_{ML}(\epsilon) = \sum_{\ell=0}^{L(\epsilon)} \sum_{i=1}^{M_\ell(\epsilon)} \frac{\DlX^i}{M_\ell(\epsilon)}\;.
  \]
  Here
  \[
    L^2(\Omega)\ni \Delta_\ell X^i = X_{\ell}^i-X_{\ell-1}^i, \quad \ell \in \bN_0,  \quad i \in \bN
  \]
  denotes a sequence of independent r.v.~and every subsequence
  $\{\Delta_\ell X^i\}_i$ consist of independent and identically
  distributed (i.i.d.) r.v., the number of levels is
  \begin{equation}\label{eq:L}
    L(\epsilon) \defeq \max\left(\ceil{\frac{\log_2(c_\alpha \epsilon^{-1}) }{\alpha}}, 1\right), \quad \epsilon >0,
  \end{equation}
  and the number of samples per level $\ell = 0,1, \ldots$ is
  \begin{equation}\label{eq:Ml}
    M_\ell(\epsilon) \defeq \max\left( \ceil{\epsilon^{-2} \sqrt{\frac{V_\ell}{C_\ell}} S_{L(\epsilon)}}, 1 \right), \quad \epsilon>0\;,
  \end{equation}
  with the monotonically increasing sequence $S_k$ defined as
    \begin{equation}\label{eq:SLDef}
      S_k \defeq \sum_{\ell =0}^k \sqrt{V_\ell C_\ell}, \quad k \in \bN_0\;.
    \end{equation}
  
  For any fixed and sufficiently large computational budget $c>0$,
  the sequence $\{M_\ell\}_{\ell =0}^L$ in~\eqref{eq:Ml} is the
  one in $\bN^{L}$ that minimizes $\Var{\cA_{ML}}$
  subject the constraint $\text{Cost}(\cA_{ML}) \le c$, cf.~\cite{MR2436856}.
  We will therefore refer to $\cA_{ML}$ as the variance minimizing
  MLMC estimator.
\end{definition}

\Seb{It is known that MLMC estimators can offer significant complexity
  (i.e., cost vs.~accuracy) benefits compared to classic Monte Carlo
  estimators \cite{MR3349310}. In fact, the variance minimizing estimator
  $\cA_{ML}(\epsilon)$ reduces the computational cost for achieving an
  approximation with mean square error of
  $\cO_\epsilon\bigl(\epsilon^2\bigr)$ from
  $\Theta_\epsilon\bigl(\epsilon^{-\left(2+\frac{\gamma}{\alpha}\right)}\bigr)$
  for the classic Monte Carlo method to
  $\Theta_\epsilon\bigl(\epsilon^{-2} S^2_{L(\epsilon)} \Hak{+ C_{L(\epsilon)}} \bigr)$, where
  \begin{equation*}
    S_{L(\epsilon)}= \begin{cases}
      \cO_\epsilon(1)& \text{if} \quad \beta>\gamma\;,\\
      \cO_\epsilon\bigl(\log\bigl(\epsilon^{-1}\bigr)\bigr)& \text{if} \quad \beta = \gamma\;,\\
      \cO_\epsilon\bigl(\epsilon^{-\frac{\gamma-\beta}{2\alpha}}\bigr)& \text{if} \quad \beta < \gamma\;,\\
    \end{cases}
  \end{equation*}
  and $C_{L(\epsilon)} = \Theta_\epsilon(\epsilon^{-\gamma/\alpha})$
  as functions of the rate triplet introduced in
  Assumption~\ref{ass:mlmcRates}.}

\Seb{In this work, we address the asymptotic normality of the MLMC
  estimator. For convenience, we}
will refer to
\[
  \frac{\cA_{ML}(\epsilon) - \E{X_{L(\epsilon)}}}{\sqrt{\Var{\cA_{ML}(\epsilon)}}}
\]
as the normalized estimator.
\Seb{When confusion is not possible,
  we will use the following shorthands,
  \[
    \cA_{ML} \defeq \cA_{ML}(\epsilon)\;, \quad M_\ell \defeq M_\ell(\epsilon)\;,
    \quad L \defeq L(\epsilon).
  \]
  The following conventions will be employed
  throughout this work:
  \[
    0\cdot (\pm \infty) = 0 \quad \text{and} \quad 0/0 = 0\;.
  \]
}

We are ready to state the main result of this work. 
\begin{theorem}[Main result]\label{thm:mainResult}
  Let $\cA_{ML}$ denote the variance minimizing MLMC estimator applied
  to estimate the expectation of $X \in L^2(\Omega)$ based on the
  collection of r.v.
  $\{X_\ell\} \subset L^2(\Omega)$ satisfying
  Assumption~\ref{ass:mlmcRates}. Additionally, if 
  \begin{enumerate}

  \item[(i)]  $\beta > \gamma$, impose no further assumptions, 
  \item[(ii)]  $\gamma \ge \beta$ and $\lim_{\ell \to \infty} S_\ell < \infty$, impose no further assumptions, 
    
  \item[(iii)] $\beta = \gamma$ and $\lim_{\ell \to \infty} S_\ell = \infty$,
    assume that
   \begin{equation}\label{eq:limCond}
 \hspace{-1cm} \lim_{\ell \to \infty} \1{V_\ell>0}\E{ \frac{ \abs{\DlX  - \E{\DlX}}^2}{ V_\ell}  \1{\frac{ \abs{\DlX  -
      \E{\DlX}}^2}{V_\ell} > \nu S_{\ell}^2 \exp\pr{(2\alpha - \gamma)\ell} }} = 0 \quad \forall \nu>0,
 \end{equation}
   
\item[(iv)] $\gamma>\beta$ and $\lim_{\ell \to \infty} S_\ell = \infty$,
  assume that $\beta< 2\alpha$,
  equality~\eqref{eq:limCond} holds and that there exists an
  $\upsilon \in [\beta,2\alpha)$ such that
  $\lim_{k \to \infty} S_k 2^{(\upsilon - \gamma)k/2} >1$.
    
\end{enumerate}
Then the normalized estimator satisfies the central limit
theorem (CLT), in the sense that
\begin{equation}\label{eq:cltMain}
  \frac{\cA_{ML} - \E{X_{L}}}{\sqrt{\Var{\cA_{ML}}}} \toDist \cN(0,1)
  \quad \text{as} \quad \epsilon \downarrow 0.
\end{equation}
  
\end{theorem}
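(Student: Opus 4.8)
The plan is to realise the normalised MLMC estimator as a normalised sum of independent, centred random variables and to verify the Lindeberg condition for the resulting triangular array (indexed by $\epsilon$). Since $\sum_{\ell=0}^{L}\DlX = X_L$, we have $\E{\cA_{ML}} = \E{X_L}$, so
\[
 \cA_{ML} - \E{X_L} = \sum_{\ell=0}^{L}\frac{1}{M_\ell}\sum_{i=1}^{M_\ell}\pr{\DlX^i - \E{\DlX}}
\]
is a finite sum of independent centred summands with $\sigma_\epsilon^2\defeq\Var{\cA_{ML}}=\sum_{\ell=0}^{L}V_\ell/M_\ell$. Writing $Z_\ell\defeq\abs{\DlX-\E{\DlX}}^2/V_\ell$ (so $\E{Z_\ell}=1$ whenever $V_\ell>0$, with the convention $0/0=0$) and collapsing the $M_\ell$ identically distributed Lindeberg contributions of each level, the Lindeberg--Feller theorem reduces \eqref{eq:cltMain} to showing that for every $\nu>0$,
\[
 \frac{1}{\sigma_\epsilon^2}\sum_{\ell=0}^{L}\frac{V_\ell}{M_\ell}\1{V_\ell>0}\,\E{Z_\ell\,\1{Z_\ell>t_\ell}}\longrightarrow 0\quad\text{as }\epsilon\downarrow0,\qquad t_\ell\defeq\frac{\nu^2 M_\ell^2\sigma_\epsilon^2}{V_\ell}.
\]
The coefficients $w_\ell\defeq V_\ell/(M_\ell\sigma_\epsilon^2)$ are nonnegative and sum to one.

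Next I would collect scale estimates. With $a_\ell\defeq\epsilon^{-2}\sqrt{V_\ell/C_\ell}\,S_L$ one always has $M_\ell\ge a_\ell$, giving $\sigma_\epsilon^2\le\epsilon^2$ and, for $V_\ell>0$, the uniform bound $t_\ell\ge\nu^2\epsilon^{-4}S_L^2\sigma_\epsilon^2/C_\ell$; also $c_\alpha e^{-\alpha L}\le\epsilon<c_\alpha e^{\alpha}e^{-\alpha L}$, so $\epsilon^{-2}$ and $e^{2\alpha L}$ agree up to a fixed factor and $L\to\infty$ as $\epsilon\downarrow0$. The decisive point is the matching lower bound $\sigma_\epsilon^2\ge\tfrac{1}{4}\epsilon^2$ for small $\epsilon$: on levels with $a_\ell\ge1$ one has $M_\ell=\ceil{a_\ell}\le2a_\ell$, hence $V_\ell/M_\ell\ge\epsilon^2\sqrt{V_\ell C_\ell}/(2S_L)$, whereas $a_\ell<1$ forces $\sqrt{V_\ell C_\ell}<\epsilon^2C_\ell/S_L$; summing these,
\[
 \sigma_\epsilon^2\ge\frac{\epsilon^2}{2S_L}\sum_{\ell:\,a_\ell\ge1}\sqrt{V_\ell C_\ell}\ge\frac{\epsilon^2}{2}\Bigl(1-\frac{\epsilon^2\sum_{\ell=0}^{L}C_\ell}{S_L^2}\Bigr),
\]
and since $\sum_{\ell=0}^{L}C_\ell=\Theta(e^{\gamma L})$ and $\epsilon^2 e^{\gamma L}=\Theta(e^{(\gamma-2\alpha)L})$, it remains to verify $S_L^2e^{(2\alpha-\gamma)L}\to\infty$. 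In case~(i) this holds because $\gamma\le2\alpha$ (as $\beta=\gamma$ and $\min(\beta,\gamma)\le2\alpha$) while $S_L\to\infty$; in case~(ii) the assumed $\upsilon\in[\beta,2\alpha)$ satisfies $S_L\ge c\,e^{(\gamma-\upsilon)L/2}$ for large $L$, so $S_L^2e^{(2\alpha-\gamma)L}\ge c^2e^{(2\alpha-\upsilon)L}\to\infty$ since $\upsilon<2\alpha$ (this is where $\beta<2\alpha$ is needed, to make $[\beta,2\alpha)$ nonempty). Hence $\sigma_\epsilon^2=\Theta(\epsilon^2)$.

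Combining these bounds with $C_\ell=\Theta(e^{\gamma\ell})$, $e^{2\alpha L}\ge e^{2\alpha\ell}$ and $S_L\ge S_\ell$, I get a constant $\kappa>0$ (depending only on the rate constants) with $t_\ell\ge\kappa\,\nu^2 S_\ell^2 e^{(2\alpha-\gamma)\ell}$ for all small $\epsilon$ and all $0\le\ell\le L$ with $V_\ell>0$. As $t\mapsto\E{Z_\ell\1{Z_\ell>t}}$ is nonincreasing, the Lindeberg sum is at most $\sum_{\ell=0}^{L}w_\ell\1{V_\ell>0}\E{Z_\ell\1{Z_\ell>\kappa\nu^2 S_\ell^2e^{(2\alpha-\gamma)\ell}}}$. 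Given $\delta>0$, assumption \eqref{eq:limCond} (with parameter $\kappa\nu^2$) yields $\ell_0$ such that this summand is $<\delta$ for $\ell>\ell_0$, so, since $\sum_\ell w_\ell=1$, that tail contributes at most $\delta$; for the finitely many $\ell\le\ell_0$ with $V_\ell>0$, $a_\ell\to\infty$ as $\epsilon\downarrow0$ (using $V_0>0$, so $S_L\ge\sqrt{V_0C_0}>0$), hence $M_\ell\to\infty$ and $t_\ell=\Theta(\nu^2\epsilon^{-2}S_L^2/C_\ell)\to\infty$; as $Z_\ell\in L^1$, dominated convergence gives $\E{Z_\ell\1{Z_\ell>t_\ell}}\to0$, and with $w_\ell\le1$ this fixed finite head tends to $0$. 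Thus the Lindeberg sum is eventually $<2\delta$; $\delta$ being arbitrary, the Lindeberg condition holds and \eqref{eq:cltMain} follows.

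I expect the crux to be the two-sided control of $\sigma_\epsilon^2$ — especially the lower bound $\sigma_\epsilon^2\gtrsim\epsilon^2$, equivalently the statement that the single-sample levels ($M_\ell=1$) carry a vanishing share of $S_L$ — together with pinning down the exact power $S_\ell^2$ in the truncation threshold $t_\ell$. The borderline growth requirements on $S_k$ are tailored precisely to this: $S_k\to\infty$ in the balanced regime $\beta=\gamma$, and the existence of $\upsilon\in[\beta,2\alpha)$ with $\lim_k S_k e^{(\upsilon-\gamma)k/2}>1$ in the cost-dominated regime $\gamma>\beta$, so that the abstract Lindeberg condition collapses exactly onto \eqref{eq:limCond}. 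The remaining ingredients — the reduction to Lindeberg--Feller, the ceiling/floor case split, and the elementary $\Theta$-bounds on $L$, $M_\ell$ and the partial sums — are routine.
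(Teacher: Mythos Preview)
Your argument for the two explicitly conditioned regimes (i) $\beta=\gamma$ and (ii) $\gamma>\beta$ is correct and matches the paper's route: the Lindeberg--Feller reduction, the two-sided estimate $\sigma_\epsilon^2=\Theta(\epsilon^2)$, the lower bound $t_\ell\gtrsim S_\ell^2 e^{(2\alpha-\gamma)\ell}$, and your head/tail split (which the paper phrases via the Silverman--Toeplitz theorem) are essentially what the paper does in Lemma~\ref{lem:varEpsRelation} and Theorem~\ref{thm:gammaLargest}.

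There is, however, a genuine gap. The theorem also covers the regime $\beta>\gamma$: hypotheses (i) and (ii) are then vacuously satisfied, so only $V_0>0$ is assumed---in particular, \eqref{eq:limCond} is \emph{not} available. For the sub-case $\gamma<2\alpha$ your argument still goes through, since $S_\ell^2 e^{(2\alpha-\gamma)\ell}\to\infty$ automatically and dominated convergence (using $\E{Z_\ell}=1$) supplies the tail estimate you attribute to \eqref{eq:limCond}. But the admissible boundary case $\gamma=2\alpha<\beta$ breaks both of your steps: here $S_L$ converges to a finite $S$, so $S_L^2 e^{(2\alpha-\gamma)L}=S_L^2$ stays bounded and $\epsilon^2\sum_{\ell\le L}C_\ell/S_L^2=\Theta(1)$ does not tend to zero, while your truncation threshold $\kappa\nu^2 S_\ell^2 e^{(2\alpha-\gamma)\ell}=\kappa\nu^2 S_\ell^2$ also stays bounded, so the tail terms need not vanish. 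The paper handles this boundary separately (Theorem~\ref{thm:betaLargest}) via an auxiliary intermediate level $\widehat L=\bigl\lceil 4\log(c_\alpha\epsilon^{-1})/(\gamma+\beta)\bigr\rceil<L$: for the variance lower bound one restricts the sum to $\ell\le\widehat L$ (now $\epsilon^2 e^{\gamma\widehat L}\to 0$), and for Lindeberg one uses that $\sum_{\widehat L<\ell\le L}\sqrt{V_\ell C_\ell}\to 0$ while $\min_{\ell\le\widehat L}\epsilon^2 M_\ell^2/V_\ell\ge \epsilon^{-2}S_0^2/C_{\widehat L}\to\infty$, after which dominated convergence on each fixed level concludes. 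This $\widehat L$-truncation is the missing idea.
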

The main result follows from Theorems~\ref{thm:betaLargest}
and~\ref{thm:gammaLargest}.  We note that Theorem~\ref{thm:mainResult} in
particular implies that the CLT always applies to the normalized
variance minimizing MLMC estimator when $\beta>\gamma$.

\begin{remark}
  The reason
  why we have not included the setting $\gamma>\beta$ and $\beta=
  2\alpha$ in Theorem~\ref{thm:mainResult} is that one cannot impose
  reasonable assumptions to exclude $M_L = \Theta_\epsilon(1)$ and
  $V_L/\Var{\cA_{ML}} = \Theta_\epsilon(1)$;
  cf.~Example~\ref{ex:fail1}. In such cases, a non-negligible
  contribution to the variance of the normalized estimator may
  derive from a finite number of samples on the finer
  levels $L, {L-1},\ldots$.
  For example, if
  $M_L = 1$ and $V_L/\Var{\cA_{ML}} \ge c>0$ for all $\epsilon>0$
  sufficiently small, then
  \[
  \frac{\cA_{ML} - \E{X_{L}}}{\sqrt{\Var{\cA_{ML}}}} =
  \sum_{\ell=0}^{L-1} \pr{\sum_{i=1}^{M_\ell} \frac{\DlX^i}{\sqrt{\Var{\cA_{ML}}} M_\ell}}
    +  \frac{\Delta_{L} X^1 \Seb{-\E{X_{L}}}}{\sqrt{\Var{\cA_{ML}}}}\;,
  \]
  and the CLT applies only if $\Delta_{\ell} X$ converges in
  distribution to a Gaussian as $\ell \to \infty$.  
\end{remark}

\subsection{Probability of failure}\label{sec:probFail}

Distributional properties of normalized sample estimators can be
useful for controlling the probability of (approximation) failure:
\begin{equation}\label{eq:probFail}
 \Prob{\abs{ \cA_{ML} - \E{X} } \ge 2\epsilon } \leq \delta\;.
\end{equation}
Here, $2\epsilon>0$ denotes the accuracy and $1-\delta >0$ the
confidence. To control the probability of failure, one
may dominate the total error from above by
the sum of a bias and a statistical error:
\begin{equation}\label{eq:splitError}
\Prob{\abs{ \cA_{ML} - \E{X} } \ge 2\epsilon } \leq
\Prob{\abs{ \E{X_L} - \E{X} } \ge \epsilon } +
\Prob{\abs{ \cA_{ML} - \E{X_L} } \ge \epsilon }\;.
\end{equation}
Assumption~\ref{ass:mlmcRates}(i) and the value of $L$ ensure that
the bias constraint is met
\[
\abs{ \E{X_L} - \E{X} } \le \epsilon.
\]
Supposing next that the CLT applies, the
key step in (approximately) controlling the statistical error
is the approximation
\[
\frac{\cA_{ML} - \E{X_L}}{\sqrt{\Var{\cA_{ML}}}} \approxDist \cN(0,1).
\]
The use of CLT in efficient algorithms for controlling the
probability of failure is a motivation for the goal of this
work: to describe as weak as possible conditions under which
the CLT applies to the standard MLMC estimator.

\begin{remark}
  Whenever $\beta\ge \gamma$ and $\alpha > \gamma/2$,
  one may reduce the bias of the variance minimizing
  MLMC estimator
  without affecting the asymptotic growth rate of the computational
  cost by replacing the rate parameter
  $\alpha$ by $\gamma/2$ in the formula for $L$ in~\eqref{eq:L} and updating
  the values for $\{M_\ell\}_{\ell=0}^L$ accordingly.
  This replacement leads to an asymptotically vanishing bias
  to standard deviation ratio,
  \[
  \lim_{\epsilon \downarrow 0} \frac{\E{X_L} - \E{X}}{\sqrt{\Var{\cA_{ML}}}} =
  \lim_{\epsilon \downarrow 0}  \epsilon^{2\alpha/\gamma-1} =0,
  \]
  and it relates to an uneven splitting of the accuracy between
  the bias and the statistical error constraints in~\eqref{eq:splitError}.
  That is,
  \[
  \begin{split}
  \Prob{\abs{ \cA_{ML} - \E{X} } \ge 2\epsilon } &\leq 
  \Prob{\abs{ \E{X_L} - \E{X} } \ge \theta(\epsilon)  \epsilon }\\
  &+\Prob{\abs{ \cA_{ML} - \E{X_L} } \ge (2-\theta(\epsilon))\epsilon }\;
  \end{split}
  \]
  for any monotonically increasing function
  $\theta:(0,\infty) \to (0,1]$ satisfying
  $\theta(\epsilon) \ge (\epsilon/c_\alpha)^{2\alpha/\gamma-1}$,
  cf.~\cite{MR3348197}.
  We leave as a remark that by
  straightforward extension of Theorem~\ref{thm:mainResult},
  the CLT also applies to the normalized variance minimizing
  MLMC estimator with $\theta$-splitting in settings where
  $\beta \ge \gamma$ and Theorem~\ref{thm:mainResult}'s
  assumptions hold.
\end{remark}

\subsection{The mass-shifted MLMC estimator}

In~\cite{glynnRhee2015,glynnZheng2016,glynnZheng2017}
Glynn et al.~show that for a collection of r.v. $\{X_\ell\}_{\ell=-1}^\infty$
satisfying Assumption~\ref{ass:mlmcRates} one can construct
the following unbiased coupled sampling method for the limit r.v.~$X$:
  \[
    Z = \sum_{\ell=0}^{\infty} \frac{\Delta_\ell X \, \, \1{N \ge \ell} }{\Prob{N \ge \ell}}.
  \]
  Here, the r.v.~$N: \Omega \to \bN_0$
  is independent of $\{\Delta_\ell X\}_{\ell=-1}^\infty$
  and $\Prob{N \ge \ell} > 0$ for all $\ell \ge 0$.
  Provided $N$ is chosen such that
  $\E{|Z|} < \infty$, the strong law of large numbers
  yields that
  \[
    \overline Z_M = \frac{1}{M} \sum_{i=0}^{M}
    Z^{i}  \overset{\rm{a.s.}}{\to} \E{X} \quad
      \text{as} \quad M \to \infty,
  \]
  where $Z^{1},Z^{2}, \ldots$ is an i.i.d.~sequence with $Z^{i} \stackrel{d}{=} Z$.
  Although $\overline Z_M$ clearly is not an MLMC estimator
  of the kind studied in this paper,
  one may view it,
  when the number of samples $M$ is large,
  as a randomized MLMC estimator where both
  $L$ and $M_\ell \approx M\times \Prob{N\ge\ell} $ for all $\ell\ge0$
  are random non-negative numbers, cf.~\cite{glynnZheng2016}.
  By carefully choosing the
  distribution of $N$ such that $\Var{Z^{i}}<\infty$
  and exploiting that $\overline Z_M$
  is the sum of i.i.d.~random variables, Glynn et al.~prove that the CLT
  applies to
  $(\overline Z_M - \E{X})/\sqrt{\Var{ \overline Z_M}}$ in settings where
  $\beta \ge \gamma$.

  Concerning the efficiency of the method, it can be shown that the distribution $N$
  that minimizes the quantity
  $\Var{\overline Z_M} \times \text{Cost}(\overline Z_M)$,
  satisfies
  \begin{equation}\label{eq:pLim}
   \Prob{N\ge \ell } = \Theta_\ell( \sqrt{V_\ell/C_\ell})
  \end{equation}
  (supposing, unlike our approach, that $V_\ell >0$ for all $\ell$).
  When $\beta >\gamma$, any distribution $N$ satisfying~\eqref{eq:pLim}
  induces a distribution $Z$ that has bounded variance, and
  consequently, the CLT applies. When $\beta =\gamma$, however,
  it turns out that $\Var{Z}=\infty$ for any $N$ satisfying~\eqref{eq:pLim},
  so that in order to obtain the CLT
  one needs to consider distributions $N$ whose mass is
  shifted slightly from the efficiency optimizing~\eqref{eq:pLim}
  to the tail:
    \[
      \Prob{N\ge \ell } = \Theta_\ell( (\ell+1) \log(\ell+2)^{1+\xi}
      \sqrt{V_\ell/C_\ell}), \qquad \xi >0.
   \]
   This shift leads to an estimator $\overline Z_M$ with
   approximation error
   \mbox{$\E{ \pr{\overline Z_M - \E{Z}}^2  }= \cO_{\epsilon}(\epsilon^2)$} obtained at the (random) computational
  cost $\cO_\epsilon(\epsilon^{-2} \log(1/\epsilon)^2
  \log(\log(1/\epsilon))^{1+\xi})$.  In comparison, for the settings
  covered by Theorem~\ref{thm:mainResult} when $\beta =\gamma$,
  the variance minimizing estimator $\cA_{ML}(\epsilon)$ achieves
  the MSE $\cO_\epsilon(\epsilon^2)$ at the slightly lower (and non-random)
  computational cost $\Theta_\epsilon(\epsilon^{-2} S_L^2) =
  \cO_{\epsilon}(\epsilon^{-2} \log(1/\epsilon)^2)$.

    Taking inspiration of from Glynn et al.'s mass-shifting approach, we propose the following 
    relative shift of ``sample mass'' from the lower levels of
    the variance minimizing estimator's optimal $\{M_\ell\}_{\ell=0}^L$ to the higher levels:
    \begin{equation}\label{eq:mlTilde}
      \widetilde M_\ell := \max\pr{ \ceil{
        \epsilon^{-2} (S_\ell+1) \log(S_{\ell}+1)^{1+\xi} \sqrt{\frac{V_\ell}{C_\ell}}   \widetilde S_L }, \, 1},
    \end{equation}
    where 
    \[
      \widetilde S_L := \sum_{\ell=0}^L  \frac{\sqrt{V_\ell C_\ell}}{(S_\ell+1) \log(S_{\ell}+1)^{1+\xi}},  \quad \xi >0,
    \]
    and the resulting estimator
    \begin{equation}\label{eq:massShiftedMLMC}
      \widetilde \cA_{ML} = \sum_{\ell=0}^{L} 
\sum_{i=1}^{\widetilde M_\ell} \frac{\DlX^i}{ \widetilde M_\ell}.
    \end{equation}
    We will refer to $\widetilde \cA_{ML}$ as the mass-shifted MLMC estimator.
    The CLT applies in all relevant settings for the normalized version of
    this estimator: 
    \begin{theorem}[CLT for mass-shifted MLMC]\label{thm:mainResult2}
      For any $\xi >0$, let $\widetilde \cA_{ML}$ denote the resulting mass-shifted 
      MLMC estimator applied to
      estimate the expectation of $X \in L^2(\Omega)$ based on the
      collection of r.v.
      $\{X_\ell\} \subset L^2(\Omega)$ satisfying
      Assumption~\ref{ass:mlmcRates}.
      Then the normalized mass-shifted MLMC estimator satisfies 
      \begin{equation}\label{eq:cltMain2}
        \frac{\widetilde \cA_{ML} - \E{X_{L}}}{\sqrt{\Var{\widetilde \cA_{ML}}}} \toDist \cN(0,1) \quad \text{as} \quad \epsilon \downarrow 0
      \end{equation}
      and the approximation error $\E{\pr{\widetilde
          \cA_{ML} - \E{X}}^2} = \cO(\epsilon^2)$ is achieved at the
      computational cost
  \[
  \Theta_{\epsilon}(\epsilon^{-2} (S_L+1)^2 \log(S_L+1)^{1+\xi} )
  = \begin{cases} 
    \cO_\epsilon(\epsilon^{-2}), & \beta > \gamma\\
    \cO_\epsilon(\epsilon^{-2} \log(1/\epsilon)^{2}
    \log(\log(1/\epsilon))^{1+\xi}), & \beta = \gamma\\
    \cO_\epsilon(\epsilon^{-2 - \frac{\gamma-\beta}{\alpha}} 
    \log(1/\epsilon)^{1+\xi}),
& \gamma>\beta.
\end{cases}
  \]
\end{theorem}
The proof of Theorem~\ref{thm:mainResult2} is given in Section~\ref{sec:massShiftProof}.

\subsection{Literature review}

In addition to the above mentioned contributions by Glynn et al.,
the CLT has been proved for MLMC methods through
assuming (or verifying for the particular sequence of r.v.~considered)
either a Lyapunov condition~\cite{hoel2014implementation}, or uniform
integrability~\cite{MR3297771,MR3449315,Giorgi}, or a weaker higher
moment decay rate~\cite{MR3348197} for the sequence
$\{\1{V_\ell>0}|\DlX-\E{\DlX}|^2/V_\ell\}_{\ell\in \bN_0}$.
To show that this work
extends the existing literature, we now provide an explicit example
that is covered by Theorem~\ref{thm:mainResult} but  
where uniform integrability does not hold.

\begin{example}
  Consider the stochastic differential equation
  \begin{equation}\label{eq:SDE}
  dY = a(Y) \,dt + b(Y) \, dW(t) \qquad t \in [0,T]  
  \end{equation}
  with final time $T>0$, initial condition $Y(0) \in \bR$,
  and coefficients $a,b: \bR \to \bR$ whose partial
  derivatives of all orders are continuous and uniformly bounded.
  For a given strike $K \in \bR$,
  we seek to approximate the expectation of the (non-discounted) digital option payoff
  $X = \1{Y(T)\ge K}$. Let $X_\ell = \1{Y_\ell(T)\ge K}$ denote the
  $\ell$-th resolution approximation of $X$ where $Y_\ell(T)$ 
  denotes the order 1.5 strong Ito-Taylor scheme~\cite[Ch.~10.4]{kloedenPlaten}
  numerical solution 
  using a uniform timestep $h_\ell = 2^{-\ell}T$.
  In order to minimize the variance,
  coupled realizations $Y_\ell(\cdot,\omega)$ and $Y_{\ell-1}(\cdot,\omega)$
  use the same Wiener path sampled at different resolutions.
  Furthermore, the scheme's fine resolution integral increments of the form
  \[
    \Delta z_n^\ell = \int_{n h_\ell}^{(n+1)h_\ell} W(s)-W(nh_\ell) \, dt
    \stackrel{\rm d}{=} \frac{\Delta W^\ell_n h_\ell }{2} + \frac{h_\ell^{3/2}}{\sqrt{12}} \chi_n,
  \]
  where $\chi_n \sim N(0,1)$ and $\Delta W^\ell_n = W((n+1) h_\ell) - W(nh_\ell)$
  are independent, are coupled to overlapping coarse ones as follows:
  \[
    \begin{split}
    \Delta z_{n}^{\ell-1} &= \Delta z_{2n}^\ell
    + \int_{(2n+1) h_\ell}^{2(n+1)h_\ell} W(s)- W(2n h_{\ell}) \, dt\\
    &= \Delta z_{2n}^\ell + h_\ell \Delta W^\ell_{2n} + \int_{(2n+1) h_\ell}^{2(n+1)h_\ell} W(s)- W((2n+1) h_{\ell}) \, dt\\
    &= \Delta z_{2n}^\ell + h_\ell \Delta W^\ell_{2n} + \Delta z_{2n+1}^\ell.
    \end{split}
  \]
  (That is, first generate $(\Delta z_{2n}^\ell, \Delta z_{2n+1}^\ell)(\omega)$,
  $\Delta W^\ell_{2n}(\omega)$ and $\Delta W^\ell_{2n+1}(\omega)$, then
  compute the overlapping coupled coarse increment $\Delta z_{n}^{\ell-1}(\omega)$
  by the above formula.)
  Assuming that the diffusion coefficient is strictly positive
  and $b'|_{D}\neq 0$ in an open domain $D\subset\bR$ containing $Y(0)$ and $K$,
  \begin{equation}\label{eq:probDomain}
  \Prob{ \abs{Y_\ell(T)-K} \le h_\ell^{3/2}} = \cO_\ell(h_\ell^{3/2})
  \end{equation}
  and
  \begin{equation}\label{eq:limsupCond}
   \limsup_{\ell \to \infty} \; \text{ess} \sup_{\omega \in \Omega} |\DlX(\omega)-\E{\DlX}|^2 =1.
  \end{equation}
  By the order 1.5 strong order scheme,
  $Y_\ell(T) - Y_{\ell-1}(T) = \cO_\ell(h_\ell^{3/2})$, which together
  with~\eqref{eq:probDomain} imply that
  $V_\ell= \Var{\Delta_\ell X} = \cO_\ell(h_\ell^{3/2})$.
  Lastly, since $\text{Cost}(Y_\ell) = \Theta_\ell(1/h_\ell)$, the rate triplet for $\{X_\ell\}$ becomes $\alpha =1, \beta=3/2$ and
  $\gamma=1$.

  Note further that the sequence
  $\{\1{V_\ell>0}|\DlX-\E{\DlX}|^2/V_\ell\}_{\ell\in \bN_0}$ is not
  uniformly integrable since by~\eqref{eq:limsupCond},
  \[
    \limsup_{\ell \to \infty} \1{V_\ell>0} \frac{ \text{ess} \sup_{\omega \in \Omega} |\DlX(\omega)-\E{\DlX}|^2}{V_\ell} 2^{-\beta \ell}  >0, 
  \]
 which implies that  
  \[
    \limsup_{\ell \to \infty} \1{V_\ell>0} \E{ \frac{|\DlX-\E{\DlX}|^2 }{V_\ell} \1{\frac{\abs{\DlX - \E{\DlX}}^2 }{V_\ell} > x} }
    =1, \quad \text{for any } x>0.
  \]
  Regardless of uniform integrability, however, the CLT applies
  according to Theorem~\ref{thm:mainResult} in 
  the current setting of $\beta>\gamma$.
\end{example}


\subsection*{Applications of MLMC}

We conclude this section with a brief survey on the relationship
between the rate parameters $\beta$ and $\gamma$ from
Assumption~\ref{ass:mlmcRates} for a couple of problems
which have been frequently studied.
  
  As a first example, consider
  the quantity of interest (QoI) $X=\varphi(Y) \in\mathbb{R}$
  with $Y:[0,T]\times \Omega \to \bR$ denoting the solution of an SDE
  of the form~\eqref{eq:SDE}. For an approximation sequence
  $X_\ell = \phi(Y_\ell)$, where $Y_\ell$ is generated by a
  numerical method with uniform timestep $h_\ell = 2^{-\ell}T$,
  one often obtains $C_\ell = \text{Cost}(X_\ell) = \cO(h_\ell^{-1})$,
  yielding $\gamma=1$ (this applies for instance to the
  Euler--Maruyama and the Milstein schemes).
  The variance decay rate $\beta$ is
  typically more sensitive, as it tends to depend on both the 
  strong order of convergence of the numerical method and the
  regularity of the functional $\varphi$. If
  the SDE coefficients and the QoI are all sufficiently
  regular, then $\beta=1$ for the
  Euler--Maruyama scheme and $\beta =2$ for the
  Milstein scheme, but low-regularity QoIs
  often lead to lower-valued $\beta$. For instance, for digital and
  barrier options, $\beta=1/2$ for Euler--Maruyama
  and $\beta=1$ for Milstein (provided no further smoothing
  is applied), cf.~\cite[Sec.~5]{MR3349310}.
  Similar reductions in the variance decay rate may occur
  if the SDE coefficients have low regularity or
  if its driving path has lower regularity than
  a Wiener process, 
  cf.~\cite{MR3501366, hoel2016}.

  As a second example, let the quantity of interest be
  $X = \varphi(u) \in\mathbb{R}$, where
  $u(\omega,\cdot)\colon D\to\mathbb{R}$ denotes
  the solution of the linear elliptic partial
  differential equation (PDE)
  \begin{equation*}
    -\operatorname{div}\left(a(\omega,x)\nabla u(\omega,x)\right) = f(\omega,x)\;,\quad\text{in }D\subset\mathbb{R}^d, \quad \omega \in \Omega\;,
  \end{equation*}
  with random coefficient functions
  $a(\omega,\cdot)\colon D\to\mathbb{R}$ and
  $f(\omega,\cdot)\colon D\to\mathbb{R}$, equipped with suitable
  boundary conditions. Similarly to the SDE problem above, the
  lower the regularity of the random coefficients and/or
  the functional $\varphi$, the lower the variance decay rate $\beta$
  becomes, cf.~\cite{Teckentrup2013}.
  Moreover, the computational cost rate $\gamma$
  is typically proportional to the dimension $d$ of the
  spatial domain $D$. 

  Finally, let us mention that MLMC has been successfully
  applied to a wide range applications, such as seismic wave
  propagation~\cite{BallesioEtAl}, stochastic reaction
  networks~\cite{anderson, moraes}, stochastic partial differential
  equations~\cite{barthLang2, mishra},
  optimal experimental design~\cite{BeckEtAl},
  Markov chain Monte Carlo simulation~\cite{scheichlMcmc, hoang},
  Bayesian inversion and filtering methods~\cite{lawJasra, lawHoel, ullmannBayes, reich},
  and rare event estimation/importance sampling~\cite{ullmannRare, kebaierImpSampling},
  to name but a few.  As a
  consequence of these applications' diverse nature, a wide variety of
  different rate triplet scenarios is commonly relevant in
  practice.

\section{Theory}\label{sec:cltTheory}

In this section we derive weak assumptions under which the normalized
MLMC estimator $(\cA_{ML} -\E{X_L})/\sqrt{\Var{\cA_{ML}}}$ converges
in distribution to a standard normal as $\epsilon \to 0$.  The main
tool used for verifying the CLT will be the Lindeberg condition, which
in its classical formulation is an integrability condition for
triangular arrays of independent random variables (r.v.) $Y_{nm}$,
with $n \in \bN$ and $1\le m \le k_n$; cf.~\cite{MR1609153}.  However,
in the multilevel setting it is more convenient to work with
generalized triangular arrays of independent r.v.\ of the form
$Y_{\epsilon m}$, which for a fixed $\epsilon>0$ take possible
non-zero elements within the set of indices $ 1\le m \le n(\epsilon)$,
where $n\colon (0,\infty) \to \bN$ is a strictly decreasing function
of $\epsilon>0$ with $\lim_{\epsilon \downarrow 0} n(\epsilon) = \infty$.

The following theorem is a trivial extension of~\cite{klenke} from
triangular arrays to generalized triangular arrays.
\begin{theorem}[Lindeberg-Feller Theorem]\label{thm:LindebergFeller}
  For every $\epsilon >0$, let $\{Y_{\epsilon m}\}$,
  $ 1\leq m \leq n(\epsilon)$ with $n\colon (0,\infty) \to \bN$ and
  $\lim_{\epsilon \downarrow 0} n(\epsilon) = \infty$ be a generalized
  triangular array of independent random variables that are centered
  and normalized, so that
  \begin{equation}\label{eq:normCenter}
    \E{Y_{\epsilon m}} = 0\quad\text{and}\quad   \sum_{m=1}^{n(\epsilon)} \E{Y_{\epsilon m}^2} = 1\;,
  \end{equation}
  respectively. Then, the Lindeberg condition: 
  \begin{equation}\label{eq:LFCondition1}
    \lim_{\epsilon \downarrow 0} 
    \sum_{m=1}^{n(\epsilon)} \E{Y_{\epsilon m}^2 \1{|Y_{\epsilon m}|>\nu }} = 0\quad\forall\,\nu>0\;,
  \end{equation}
  holds, if and only if
  \begin{equation}\label{eq:extendedClt}
    \sum_{m=1}^{n(\epsilon)}
    Y_{\epsilon m}  \toDist  \cN(0,1) \text{ as } \epsilon \downarrow 0 \quad \text{and} \quad 
    \lim_{\epsilon \downarrow 0} \max_{m \in \{1,2,\ldots,n(\epsilon)\}}
    \E{Y_{\epsilon m}^2} = 0\;.
  \end{equation}
\end{theorem}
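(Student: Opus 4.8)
The plan is to deduce the statement from the classical Lindeberg--Feller theorem for ordinary triangular arrays, as proved in~\cite{klenke}, by a routine reparametrization. The key observation is that each of the three assertions involved---that the Lindeberg sum $\sum_{m=1}^{n(\epsilon)}\E{Y_{\epsilon m}^2\1{|Y_{\epsilon m}|>\nu}}$ vanishes in the limit, that $\max_{1\le m\le n(\epsilon)}\E{Y_{\epsilon m}^2}\to 0$, and that $\sum_{m=1}^{n(\epsilon)}Y_{\epsilon m}\toDist\cN(0,1)$---is a convergence statement as the continuous parameter $\epsilon$ decreases to $0$. For the first two this is convergence of a real-valued function of $\epsilon$, which is equivalent to convergence along every sequence $\epsilon_j\downarrow 0$; for the third, writing $F_\epsilon$ for the law of $\sum_{m=1}^{n(\epsilon)}Y_{\epsilon m}$, the same equivalence applies to the real-valued map $\epsilon\mapsto\int g\,\mathrm{d}F_\epsilon$ for each bounded continuous test function $g$, so weak convergence as $\epsilon\downarrow 0$ is again equivalent to weak convergence along every sequence $\epsilon_j\downarrow 0$.

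First I would fix an arbitrary sequence $\epsilon_j\downarrow 0$ and put $k_j\defeq n(\epsilon_j)$; since $\lim_{\epsilon\downarrow 0}n=\infty$ we have $k_j\to\infty$, and after passing to a subsequence we may assume $j\mapsto k_j$ is strictly increasing, so that $\{Y_{\epsilon_j m}:1\le m\le k_j\}_{j\in\bN}$ is precisely an ordinary triangular array of independent random variables, with row $j$ centered and with row variance sum $\sum_{m=1}^{k_j}\E{Y_{\epsilon_j m}^2}=1$ by~\eqref{eq:normCenter}. If the Lindeberg condition~\eqref{eq:LFCondition1} is assumed, it holds in particular along this array, and the classical theorem yields $\sum_{m=1}^{k_j}Y_{\epsilon_j m}\toDist\cN(0,1)$ together with $\max_{1\le m\le k_j}\E{Y_{\epsilon_j m}^2}\to 0$ as $j\to\infty$; conversely, if~\eqref{eq:extendedClt} is assumed, it restricts to the array and the classical theorem returns the Lindeberg condition for it. In both directions, since the relevant conclusion has been established along every sequence $\epsilon_j\downarrow 0$---and the passage to a subsequence with monotone row lengths is harmless, by the standard fact that a limit holding along a further subsequence of every subsequence holds along the original sequence---the corresponding assertion of the theorem holds as $\epsilon\downarrow 0$, completing both implications.

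There is no genuine obstacle here: the analytic content is entirely inherited from the classical Lindeberg--Feller theorem, and the only point needing a little care is the interface between the continuous parameter $\epsilon$ and the discrete row index of the classical statement, which is dispatched by the two elementary facts used above---convergence as $\epsilon\downarrow 0$ is equivalent to convergence along all sequences $\epsilon_j\downarrow 0$, and convergence along a further subsequence of every subsequence implies convergence along the sequence. (The entries $Y_{\epsilon m}$ with $m>n(\epsilon)$, being identically zero, contribute nothing and may simply be discarded when forming the array.)
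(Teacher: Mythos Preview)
Your proposal is correct and matches the paper's approach: the paper does not give a proof at all but simply states that the theorem is ``a trivial extension of~\cite{klenke} from triangular arrays to generalized triangular arrays,'' and your argument spells out exactly that trivial extension via the sequence/subsequence reduction to the classical Lindeberg--Feller theorem.
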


We will refer to~\eqref{eq:extendedClt} as the \emph{extended CLT
  condition}.  By defining
\begin{equation}\label{eq:nDef}
n(\epsilon) \defeq \sum_{\ell=0}^L M_\ell,
\end{equation}
and
\begin{equation}
Y_{\epsilon m} \defeq \begin{cases} 
\frac{ \Delta_0 X^m - \E{\Delta_0X}}{ \sqrt{\Var{\cA_{ML}}} M_0} &  m \le M_0\\
\frac{\Delta_1 X^m - \E{\Delta_1 X}}{\sqrt{\Var{\cA_{ML}} } M_1} &  M_0 < m \le M_0 + M_{1}\\
\qquad\vdots\\
\frac{\Delta_L X^m - \E{\Delta_L X}}{\sqrt{\Var{\cA_{ML}}  } M_L} &
n(\epsilon)-M_L < m \le n(\epsilon),
\end{cases}
\end{equation}
the normalized variance minimizing MLMC estimator
can be represented by generalized triangular arrays as follows:
\begin{equation}\label{eq:scaledML}
  \frac{\cA_{ML} - \E{X_L}}{\sqrt{\Var{\cA_{ML}}}} = \sum_{m=1}^{n(\epsilon)} Y_{\epsilon m}\;.
\end{equation}
  \Hak{We note that the telescoping property
    $\E{X_L} = \sum_{\ell=0}^L \E{\DlX}$
    was used to obtain~\eqref{eq:scaledML}. Moreover, the representation~\eqref{eq:scaledML}
    and the below corollary trivially extends to any normalized MLMC estimator.
    }

\begin{corollary}\label{cor:lindebergGeneral}
  Let $\cA_{ML}$ denote the variance minimizing MLMC estimator applied to estimate
    the expectation of $X \in L^2(\Omega)$ based on the collection of
    r.v.
  $\{X_\ell\} \subset L^2(\Omega)$ satisfying
  Assumption~\ref{ass:mlmcRates}.  Suppose that $\Var{\cA_{ML}}>0$ for
  any $\epsilon >0$.  Then the normalized
  estimator~\eqref{eq:scaledML} satisfies the extended CLT
  condition~\eqref{eq:extendedClt}, if and only if for any $\nu>0$,
\begin{equation}\label{eq:lindebergGeneral}
\lim_{\epsilon \downarrow 0} \sum_{\ell=0}^L
\frac{ V_\ell}{ \Var{\cA_{ML}} M_\ell } \E{ \frac{ \abs{\DlX  - \E{\DlX}}^2}{V_\ell}  \1{\frac{ \abs{\DlX  -
      \E{\DlX}}^2}{V_\ell} > \frac{\Var{\cA_{ML}} M_\ell^2}{V_\ell} \nu}} =0.
\end{equation}

\end{corollary}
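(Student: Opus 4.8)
The plan is to apply Theorem~\ref{thm:LindebergFeller} to the generalized triangular array $\{Y_{\epsilon m}\}$ defined just above the statement, and then rewrite the abstract Lindeberg condition~\eqref{eq:LFCondition1} in terms of the level-wise quantities $V_\ell$, $M_\ell$, and $\Var{\cA_{ML}}$. First I would verify the hypotheses of Theorem~\ref{thm:LindebergFeller}: by construction $\E{Y_{\epsilon m}} = 0$ for every $m$, and summing $\E{Y_{\epsilon m}^2}$ over the block $M_0 + \dots + M_{\ell-1} < m \le M_0 + \dots + M_\ell$ gives $M_\ell \cdot V_\ell/(\Var{\cA_{ML}} M_\ell^2) = V_\ell/(\Var{\cA_{ML}} M_\ell)$, and since $\Var{\cA_{ML}} = \sum_{\ell=0}^L V_\ell/M_\ell$ (independence across levels and across samples), summing over $\ell = 0,\dots,L$ yields exactly $1$; thus~\eqref{eq:normCenter} holds. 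The function $n(\epsilon) = \sum_{\ell=0}^L M_\ell$ is $\bN$-valued, and since $L(\epsilon)\to\infty$ and each $M_\ell\ge 1$, we have $n(\epsilon)\to\infty$ as $\epsilon\downarrow 0$; strict monotonicity is inessential here (or can be arranged by perturbation, since the Lindeberg--Feller conclusion is insensitive to it), so the theorem applies and the extended CLT condition~\eqref{eq:extendedClt} is equivalent to~\eqref{eq:LFCondition1}.

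Next I would unpack~\eqref{eq:LFCondition1} for our array. Split the sum $\sum_{m=1}^{n(\epsilon)}$ into the $L+1$ level-blocks. Within the $\ell$-th block all $Y_{\epsilon m}$ are i.i.d.\ copies of $(\DlX - \E{\DlX})/(\sqrt{\Var{\cA_{ML}}}\, M_\ell)$, so the block contributes
\[
M_\ell \, \E{\frac{(\DlX - \E{\DlX})^2}{\Var{\cA_{ML}}\, M_\ell^2}\, \1{\frac{|\DlX - \E{\DlX}|}{\sqrt{\Var{\cA_{ML}}}\, M_\ell} > \nu}}
= \frac{V_\ell}{\Var{\cA_{ML}}\, M_\ell}\, \E{\frac{|\DlX - \E{\DlX}|^2}{V_\ell}\, \1{\frac{|\DlX - \E{\DlX}|^2}{V_\ell} > \frac{\Var{\cA_{ML}}\, M_\ell^2}{V_\ell}\nu}},
\]
where I have pulled the deterministic factor $1/(\Var{\cA_{ML}} M_\ell^2)$ out of the expectation, divided numerator and denominator inside the indicator by $V_\ell$, and used that squaring the strict inequality $|Z| > c$ with $c>0$ gives $Z^2 > c^2$. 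Summing over $\ell$ and taking $\epsilon\downarrow 0$ shows that~\eqref{eq:LFCondition1} is precisely~\eqref{eq:lindebergGeneral}. One subtlety is the handling of levels with $V_\ell = 0$: there $\DlX - \E{\DlX} = 0$ a.s., so the corresponding summand vanishes identically and, with the convention $0/0 = 0$ adopted in the paper, the displayed identity remains valid termwise; equivalently one may insert the indicator $\1{V_\ell>0}$ without changing anything.

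The only genuinely delicate points are bookkeeping rather than conceptual: confirming the variance decomposition $\Var{\cA_{ML}} = \sum_\ell V_\ell/M_\ell$ (which follows from the mutual independence of the families $\{\DlX^i\}_i$ across $\ell$ together with the i.i.d.\ structure within each level), and ensuring the algebraic manipulation of the truncation threshold inside the indicator is carried out consistently — in particular that the strict inequality is preserved under squaring, which is legitimate because both sides are nonnegative and, on the event that matters, the left side is strictly positive. I expect the main (minor) obstacle to be the degenerate-level edge case and matching it to the stated conventions $0\cdot(\pm\infty) = 0$ and $0/0 = 0$; once that is dispatched, the corollary is an immediate translation of Theorem~\ref{thm:LindebergFeller}.
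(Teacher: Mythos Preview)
Your proposal is correct and follows essentially the same route as the paper: verify that the triangular array~\eqref{eq:scaledML} is centered, normalized, and independent so that Theorem~\ref{thm:LindebergFeller} applies, then split the Lindeberg sum~\eqref{eq:LFCondition1} into level blocks and use the i.i.d.\ structure within each block to collapse it to~\eqref{eq:lindebergGeneral}. Your treatment is in fact slightly more careful than the paper's, as you explicitly address the degenerate levels $V_\ell=0$, the requirement $n(\epsilon)\to\infty$, and the harmless relabeling $\nu^2\leftrightarrow\nu$ that occurs when squaring the indicator threshold (which the paper glosses over).
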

\begin{proof}
  For all $\epsilon>0$, the triangular array
  representation~\eqref{eq:scaledML} of the MLMC estimator obviously
  satisfies the centering and normalization
  conditions~\eqref{eq:normCenter}, and its elements are centered and
  mutually independent. By Theorem~\ref{thm:LindebergFeller}, the
  extended CLT condition thus holds if and only if Lindeberg's
  condition~\eqref{eq:LFCondition1} holds. For any $\nu >0$, here
  Lindeberg's condition takes the form: 
  \[
    \begin{split}
      &\lim_{\epsilon \to 0} \sum_{m=1}^{n(\epsilon)} \E{ Y_{\epsilon m}^2
        \1{\lvert Y_{\epsilon m}\rvert > \nu}}  \\
      &= \lim_{\epsilon \to 0}  \sum_{\ell=0}^L \sum_{i=1}^{M_\ell}
       \E{ \frac{ \abs{\DlX^i  -
            \E{\DlX}}^2}{M_\ell^2 \Var{\cA_{ML}}}  \1{\frac{ \abs{\DlX^i  -
              \E{\DlX}}^2}{\Var{\cA_{ML}} M_\ell^2} > \nu^2}}\\
      & = \lim_{\epsilon \downarrow 0} \sum_{\ell=0}^L \frac{V_\ell}{M_\ell\Var{\cA_{ML}}} \E{ \frac{ \abs{\DlX  -
            \E{\DlX}}^2}{V_\ell}  \1{\frac{ \abs{\DlX  -
              \E{\DlX}}^2}{V_\ell} > \frac{\Var{\cA_{ML}} M_\ell^2}{V_\ell}
          \nu^2}}.
    \end{split}
  \]
\end{proof}
Assumption~\ref{ass:mlmcRates} does not provide any lower bound on the
decay rate of the variance sequence $\{V_\ell\}$, and therefore it
alone is not sufficiently strong to ensure that Lindeberg's
condition~\eqref{eq:lindebergGeneral} holds in general. The problem is
that without any lower bound on $V_\ell$, there are asymptotic
settings where a non-negligible contribution to the variance of the
variance minimizing MLMC estimator derives from a finite number of samples.
\begin{example}\label{ex:fail1}
  Consider the setting where $\beta \le 2\alpha < \gamma$, for some 
  constants $c_2>c_1>0$,
\[
 c_1 2^{-2\alpha \ell}\le V_\ell \le c_2 2^{-\beta\ell} \qquad \forall \ell \in \bN_0,   
\]
and for an infinite subsequence $\{k_i\} \subset \bN_0$,
\[
V_{k_i} = \Theta_{i}(2^{-2\alpha k_i}) \quad \text{and} \quad S_{k_i} = \Theta_{i}(2^{(\gamma-2\alpha)k_i/2})
\quad \forall i \in \bN_0.
\]
Then equation~\eqref{eq:Ml} implies there exists $c,C,\tilde{c},\hat{c} \in \bR_+$ such that
for all $y \in \{\epsilon>0 \mid L(\epsilon) \in \{k_i\} \}$,
\[
1 \le M_{L(y)} < C,
\]
and
\[
\hat{c} \le \max\pr{\frac{V_{L(y)}}{M_{L(y)} \Var{\cA_{ML}(y)}}, \;
  \frac{M_{L(y)}^2 \Var{\cA_{ML}(y)}}{V_{L(y)}}} \le \tilde{c}.
\]
Hence, for any $\nu<(2\tilde{c})^{-1}$,
\[
\begin{split}
& \limsup_{\epsilon \downarrow 0} \sum_{\ell=0}^L
\frac{V_\ell}{ M_\ell \Var{\cA_{ML}} } \E{ \frac{ \abs{\DlX  -
      \E{\DlX}}^2}{V_\ell}  \1{\frac{ \abs{\DlX  -
      \E{\DlX}}^2}{V_\ell} > \frac{\Var{\cA_{ML}} M_\ell^2}{V_\ell}
  \nu}} \\
& \ge
\limsup_{\epsilon \downarrow 0} 
\frac{V_L}{ M_L \Var{\cA_{ML}} } \E{ \frac{ \abs{\Delta_L X  -
      \E{\Delta_LX}}^2}{V_L}  \1{\frac{ \abs{\Delta_L X  -
      \E{\Delta_L X}}^2}{V_L} > \frac{\Var{\cA_{ML}} M_L^2}{V_L}
    \nu}}\\
& \ge
\limsup_{i \to \infty} 
\hat{c} \, \E{ \frac{ \abs{\Delta_{k_i} X  -
      \E{\Delta_{k_i} X}}^2}{V_{k_i}}  \1{\frac{ \abs{\Delta_{k_i} X  -
      \E{\Delta_{k_i} X}}^2}{V_{k_i}} > \frac{1}{2}}} \ge \frac{\hat c}{2} > 0.
\end{split}
\]

\end{example}

Example~\ref{ex:fail1} illustrates that Assumption~\ref{ass:mlmcRates}
is not sufficiently strong to ensure condition~\eqref{eq:lindebergGeneral}
when $\gamma >\beta$.
We therefore impose the following additional variance decay assumption,
which can be viewed
as an implicit weak lower bound on the sequence $\{V_\ell\}$.

\begin{assumption}\label{ass:mlmcRates2}
  If Assumption~\ref{ass:mlmcRates} holds for a collection of
  r.v.~$\{X_\ell\} \subset L^2(\Omega)$ with limit $X \in L^2(\Omega)$
  in the setting $\gamma > \beta$ and $\lim_{\ell \to \infty} S_\ell = \infty$,
  then assume additionally that $\beta <2\alpha$ and that there exists
  an $\upsilon \in [\beta, 2\alpha)$ such that
    \[
      \liminf_{\ell \to \infty} S_\ell 2^{(\upsilon -\gamma)\ell/2} >1.
    \]
\end{assumption}

\begin{lemma}\label{lem:varEpsRelation}
  Let $\cA_{ML}$ denote the variance minimizing MLMC estimator applied to estimate
    the expectation of $X \in L^2(\Omega)$ based on the collection of
    r.v.
    $\{X_\ell\} \subset L^2(\Omega)$ satisfying
  Assumptions~\ref{ass:mlmcRates} and~\ref{ass:mlmcRates2}. Then
\begin{equation}\label{eq:condition1}
\lim_{\epsilon \downarrow 0} \frac{\Var{\cA_{ML}}}{\epsilon^2} =1\;.
\end{equation}
\end{lemma}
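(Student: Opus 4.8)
The plan is to evaluate $\Var{\cA_{ML}}$ in closed form, compare it to the ``idealized'' variance one would obtain by dropping the ceiling and the truncation at $1$ in the sample allocation~\eqref{eq:Ml}, and then use Assumptions~\ref{ass:mlmcRates} and~\ref{ass:mlmcRates2} to show the rounding correction is $\littleO{\epsilon^2}$. By mutual independence of the samples, $\Var{\cA_{ML}} = \sum_{\ell=0}^{L} V_\ell / M_\ell$ (with the convention $0/0 = 0$ and recalling $M_\ell \geq 1$). Since $V_0 > 0$ we have $S_L \geq \sqrt{V_0 C_0} > 0$, so for every level with $V_\ell > 0$ the idealized count $\tilde M_\ell \defeq \epsilon^{-2}\sqrt{V_\ell/C_\ell}\, S_L$ is strictly positive and $M_\ell = \max(\ceil{\tilde M_\ell},1)$. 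The guiding identity is $\sum_{\ell=0}^L V_\ell/\tilde M_\ell = \epsilon^2 S_L^{-1}\sum_{\ell=0}^L \sqrt{V_\ell C_\ell} = \epsilon^2$ (the $V_\ell=0$ levels contributing $0$ on both sides), so the task reduces to controlling the effect of replacing $\tilde M_\ell$ by $M_\ell$. The upper bound is immediate: $M_\ell \geq \tilde M_\ell$ whenever $V_\ell > 0$, while the levels with $V_\ell = 0$ contribute nothing, so $\Var{\cA_{ML}} \leq \sum_{\ell\colon V_\ell>0} V_\ell/\tilde M_\ell = \epsilon^2$ for every $\epsilon>0$, giving $\limsup_{\epsilon \downarrow 0} \Var{\cA_{ML}}/\epsilon^2 \leq 1$ already under Assumption~\ref{ass:mlmcRates} with $V_0>0$.

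For the lower bound I would establish, for each $\ell \in \{0,1,\dots,L\}$, the per-level estimate
\[
  \frac{V_\ell}{M_\ell} \;\geq\; \frac{\epsilon^2\sqrt{V_\ell C_\ell}}{S_L} - \frac{\epsilon^4 C_\ell}{S_L^2}\;.
\]
When $\tilde M_\ell \geq 1$ this follows from $M_\ell = \ceil{\tilde M_\ell} < \tilde M_\ell + 1$, the elementary bound $(1+x)^{-1} \geq 1 - x$, and the identities $V_\ell/\tilde M_\ell = \epsilon^2\sqrt{V_\ell C_\ell}/S_L$ and $V_\ell/\tilde M_\ell^2 = \epsilon^4 C_\ell/S_L^2$. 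In the degenerate cases ($0 < \tilde M_\ell < 1$, where $M_\ell = 1$ and, since $\tilde M_\ell<1$, $V_\ell < \epsilon^4 C_\ell/S_L^2$; or $V_\ell = 0$) the right-hand side is non-positive and the inequality is trivial. Summing over $\ell$ and using $\sum_{\ell=0}^L \sqrt{V_\ell C_\ell} = S_L$ gives
\[
  \Var{\cA_{ML}} \;\geq\; \epsilon^2\Big(1 - \frac{\epsilon^2}{S_L^2}\sum_{\ell=0}^L C_\ell\Big)\;.
\]

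It then remains to show that the error term $E(\epsilon) \defeq \epsilon^2 S_L^{-2}\sum_{\ell=0}^L C_\ell$ vanishes as $\epsilon \downarrow 0$, and this is precisely where Assumption~\ref{ass:mlmcRates2} enters. Since $C_\ell = \Theta_\ell(e^{\gamma\ell})$ with $\gamma>0$, the geometric sum satisfies $\sum_{\ell=0}^L C_\ell = \cO_L(e^{\gamma L})$, and the definition of $L(\epsilon)$ gives $\epsilon = \Theta_L(e^{-\alpha L})$ together with $L(\epsilon)\to\infty$ as $\epsilon \downarrow 0$; hence $E(\epsilon) = \cO_L\big(e^{(\gamma - 2\alpha)L}/S_L^2\big)$. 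If $\beta = \gamma$, then $\gamma = \min(\beta,\gamma) \leq 2\alpha$ by Assumption~\ref{ass:mlmcRates}, so the numerator is bounded, while $S_L \to \infty$ by Assumption~\ref{ass:mlmcRates2}(i), and thus $E(\epsilon) \to 0$. If $\gamma > \beta$, Assumption~\ref{ass:mlmcRates2}(ii) furnishes $\upsilon \in [\beta, 2\alpha)$ with $S_L\, e^{(\upsilon - \gamma)L/2} > 1$ for all large $L$, i.e.\ $S_L^2 > e^{(\gamma - \upsilon)L}$, whence $E(\epsilon) = \cO_L\big(e^{(\upsilon - 2\alpha)L}\big) \to 0$ because $\upsilon < 2\alpha$. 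In both cases $\liminf_{\epsilon \downarrow 0} \Var{\cA_{ML}}/\epsilon^2 \geq 1$, which together with the upper bound yields~\eqref{eq:condition1}.

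The main obstacle is the bookkeeping around the two nonlinearities in~\eqref{eq:Ml}: one must keep the rounding loss in the clean form $\epsilon^4 S_L^{-2}\sum_\ell C_\ell$ while handling the degenerate levels (those with $V_\ell = 0$ or with idealized count below $1$) without spoiling that form, and then recognize that Assumption~\ref{ass:mlmcRates2} is exactly the minimal lower-growth condition on $S_L$ needed to make this loss negligible relative to $\epsilon^2$.
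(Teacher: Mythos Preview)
Your argument is essentially the paper's for the regimes $\beta=\gamma$ and $\gamma>\beta$, but you omit the case $\beta>\gamma$ entirely. This is a genuine gap, not a cosmetic one: Assumption~\ref{ass:mlmcRates2} imposes only $V_0>0$ when $\beta>\gamma$, and the lemma is stated for all three rate regimes. In the easy subcase $\gamma<2\alpha$ your error term $E(\epsilon)=\cO_L\!\big(e^{(\gamma-2\alpha)L}/S_L^2\big)$ would still vanish (the numerator decays and $S_L\ge S_0>0$), so that part is a one-line addition. The critical subcase $\gamma=2\alpha<\beta$, which is allowed by $\min(\beta,\gamma)\le 2\alpha$, however breaks your bound: then $e^{(\gamma-2\alpha)L}=1$, while $\sqrt{V_\ell C_\ell}=\cO_\ell\!\big(e^{(\gamma-\beta)\ell/2}\big)$ is summable, so $S_L\to S<\infty$ and $E(\epsilon)$ stays bounded away from zero. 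Your inequality $\Var{\cA_{ML}}/\epsilon^2\ge 1-E(\epsilon)$ therefore fails to give $\liminf\ge 1$ in this regime.

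The paper closes this gap with an additional idea: it introduces an intermediate level $\widehat L\defeq\max\big(\big\lceil 4\log(c_\alpha\epsilon^{-1})/(\gamma+\beta)\big\rceil,\,1\big)\le L$ and bounds the variance from below by the partial sum over $\ell\le\widehat L$ only. This replaces $\sum_{\ell\le L}C_\ell$ by $\cO\!\big(e^{\gamma\widehat L}\big)$, and since $\beta>\gamma$ one has $\epsilon^2 e^{\gamma\widehat L}=\cO\!\big(\epsilon^{2-4\gamma/(\gamma+\beta)}\big)\to 0$; meanwhile $S_{\widehat L}/S_L\to 1$, so nothing is lost in the leading term. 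Your proposal needs this truncation (or an equivalent device) to be complete.
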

\begin{proof}
  For any $\epsilon >0$, it follows from equation~\eqref{eq:Ml} that
\[
\begin{split}
\frac{\Var{\cA_{ML}}}{\epsilon^2}  =
\sum_{\ell=0}^L \frac{V_\ell }{\epsilon^2 M_\ell} 
\leq \sum_{\ell=0}^L \frac{\sqrt{V_\ell C_\ell} }{S_L}= 1\;,
\end{split}
\]
and by the mean value theorem there exists a
constant $C>0$ such that
\begin{equation}\label{eq:condition1LowerBound}
\begin{split}
\sum_{\ell=0}^L \frac{V_\ell }{\epsilon^2 M_\ell} &\ge
\sum_{\ell=0}^L \1{V_\ell >0} \frac{V_\ell }{ \sqrt{\frac{V_\ell}{C_\ell}}S_L  + \epsilon^2}\\
&\ge 1- \sum_{\ell=0}^L  \1{V_\ell >0}\frac{V_\ell \epsilon^2}{\frac{V_\ell}{C_\ell} S_L^2 }\\
& \ge 1 - \epsilon^2 \frac{\sum_{\ell=0}^L  C_\ell}{S_L^2 }\\
&\ge 1 - C \epsilon^2\frac{ 2^{\gamma L}}{S_L^2}\;.
\end{split}
\end{equation}
To complete the proof, it remains to verify that
\begin{equation}\label{eq:condition1Closure}
\lim_{\epsilon \downarrow 0} \frac{\epsilon^2 2^{\gamma L}}{S_L^2} =0\;.
\end{equation}

We separate the proof into three cases:

{\bf (i):} If $\beta < \gamma$ and $\lim_{\ell \to \infty} S_\ell = \infty$,
then Assumption~\ref{ass:mlmcRates2} implies that
\[
\frac{\epsilon^2 2^{\gamma L}}{S_L^2} = \cO(\epsilon^{2-\upsilon/\alpha})\;,
\]
and since $\upsilon < 2 \alpha$, the claim follows. 

{\bf (ii):} If $\beta = \gamma$ and $\lim_{\ell \to \infty} S_\ell = \infty$,
then $\gamma \le 2\alpha$, cf.~Assumption~\ref{ass:mlmcRates}, implies
that $\epsilon^2 2^{\gamma L} = \cO_\epsilon(1)$ and the claim follows.

{\bf (iii):} If $\lim_{\ell \to \infty} S_\ell =:S < \infty$,
then there exists a $k>1$ such that $\gamma/k < 2\alpha$ and a
$C>0$ such that 
\[
\begin{split}
  \frac{\Var{\cA_{ML}}}{\epsilon^2}  &\ge
  \sum_{\ell=0}^{\ceil{L/k}} \1{V_\ell >0} \frac{V_\ell }{
    \sqrt{\frac{V_\ell}{C_\ell}} S  + \epsilon^2} \ge
  \frac{S_{\ceil{L/k}}}{S} - C \epsilon^2\frac{ 2^{\gamma L/k}}{S^2},
\end{split}
\]
The claim follows from $\lim_{\epsilon \downarrow 0} \epsilon^2 2^{\gamma L/k} = 0$
and $\lim_{\epsilon \downarrow 0} S_{\ceil{L/k}} = S$.

Case {\bf (iii)} covers all
settings $\gamma \ge \beta$ which 
are not covered by either {\bf (i) } or {\bf (ii)}.
Furthermore, since $S_\ell  = \cO_\ell(2^{(\gamma- \beta) \ell/2})$,
it is clear that {\bf (iii)} also covers 
all settings with $\beta > \gamma$.
This shows that cases {\bf (i)--(iii)} cover
all settings that are valid under
Assumptions~\ref{ass:mlmcRates}
and~\ref{ass:mlmcRates2}.


\end{proof}
Lemma~\ref{lem:varEpsRelation} implies that we can reformulate
Lindeberg's condition for the MLMC estimator as follows:

\begin{corollary}\label{cor:extCLT}
  Let $\cA_{ML}$ denote the variance minimizing MLMC estimator applied to estimate the
    expectation of $X \in L^2(\Omega)$ based on the collection of
    r.v.\ $\{X_\ell\} \subset L^2(\Omega)$ satisfying
    Assumptions~\ref{ass:mlmcRates} and~\ref{ass:mlmcRates2}.
    Then the normalized MLMC estimator satisfies the extended CLT
  condition~\eqref{eq:extendedClt}, if and only if for any $\nu >0$,
\begin{equation}\label{eq:lindebergCond2}
  \lim_{\epsilon \downarrow 0} \sum_{\ell=0}^L \frac{\sqrt{V_\ell C_\ell}}{S_L} \1{V_\ell >0} \E{\frac{\abs{\DlX - \E{\DlX}}^2}{V_\ell}
    \1{\frac{\abs{\DlX - \E{\DlX}}^2 }{V_\ell} > \frac{\epsilon^2 M_\ell^2}{V_\ell} \nu}} =0\;.
\end{equation}
\end{corollary}
\begin{proof}
  From the proof of Lemma~\ref{lem:varEpsRelation} it follows that
  there exists an $\bar \epsilon>0$ such that
\[
  \frac{1}{2} \le \frac {\Var{\cA_{ML}}}{\epsilon^2} \le 1\;, \quad
  \forall \epsilon \in (0, \bar \epsilon)\;.
\]
Consequently, for any $\epsilon \in(0, \bar \epsilon)$ and any $\nu>0$
we have that
\[
\begin{split}
& \sum_{\ell=0}^L \E{ \frac{ \abs{\DlX  -
      \E{\DlX}}^2}{\Var{\cA_{ML}}  M_\ell}  \1{\frac{ \abs{\DlX  -
      \E{\DlX}}^2}{V_\ell} > \frac{\Var{\cA_{ML}} M_\ell^2}{V_\ell} \nu}}\\
& \ge  \sum_{\ell=0}^L \E{ \frac{ \abs{\DlX  -\E{\DlX}}^2}{\epsilon^2 M_\ell}  \1{\frac{ \abs{\DlX  -
      \E{\DlX}}^2}{V_\ell} > \frac{\epsilon^2 M_\ell^2}{V_\ell} \nu}}\;,
\end{split}
\]
as well as
\[
\begin{split}
& \sum_{\ell=0}^L \frac{1}{\Var{\cA_{ML}}} \E{ \frac{ \abs{\DlX  -
      \E{\DlX}}^2}{M_\ell}  \1{\frac{ \abs{\DlX  -
      \E{\DlX}}^2}{V_\ell} > \frac{\Var{\cA_{ML}} M_\ell^2}{V_\ell} \nu}}\\
& \le  2\sum_{\ell=0}^L  \E{ \frac{ \abs{\DlX  -
      \E{\DlX}}^2}{\epsilon^2 M_\ell}  \1{\frac{ \abs{\DlX  -
      \E{\DlX}}^2}{V_\ell} > \frac{\epsilon^2 M_\ell^2}{2V_\ell} \nu}}\;.
\end{split}
\]
These upper and lower bounds imply that that Lindeberg's
condition~\eqref{eq:lindebergGeneral} is equivalent to the following
condition: for any $\nu>0$ it holds that
\[
\lim_{\epsilon \downarrow 0} \sum_{\ell=0}^L \E{ \frac{ \abs{\DlX  -
      \E{\DlX}}^2}{\epsilon^2 M_\ell}  \1{\frac{ \abs{\DlX  -
      \E{\DlX}}^2}{V_\ell} > \frac{\epsilon^2 M_\ell^2}{V_\ell} \nu}}
= 0\;.
\]

Following similar steps as those leading to
inequality~\eqref{eq:condition1LowerBound}, we further note that for
sufficiently small $\epsilon>0$,
\begin{equation}\label{eq:condition2_1}
\begin{split}
&\sum_{\ell=0}^L \frac{1}{\epsilon^2 M_\ell}\E{\abs{\DlX - \E{\DlX}}^2 \1{\frac{\abs{\DlX - \E{\DlX}}^2 }{\epsilon^2 M_\ell^2} > \nu}}\\
& = \sum_{\ell=0}^L \left\{\frac{\sqrt{V_\ell C_\ell}}{S_L} \E{\frac{\abs{\DlX - \E{\DlX}}^2}{V_\ell}
    \1{\frac{\abs{\DlX - \E{\DlX}}^2 }{V_\ell} > \frac{\epsilon^2
        M_\ell^2}{V_\ell} \nu}} \right\} - \rho(\epsilon)\;,
\end{split} 
\end{equation}
where the mapping $\rho\colon\bR_+ \to [0,\infty)$, satisfying
$\lim_{\epsilon\downarrow 0} \rho(\epsilon) =0$, can be derived as in
the proof of Lemma~\ref{lem:varEpsRelation}.  
\end{proof}

In settings with $\lim_{\ell \to \infty} S_\ell< \infty$,
the summability of the sequence
$\{\sqrt{C_\ell V_\ell}\}$ turns out to be sufficient to
prove that the extended CLT condition holds.
\begin{theorem}\label{thm:betaLargest}
  Let $\cA_{ML}$ denote the variance minimizing MLMC estimator applied to estimate
    the expectation of $X \in L^2(\Omega)$ based on the collection of
    r.v.
    $\{X_\ell\} \subset L^2(\Omega)$ satisfying
    Assumptions~\ref{ass:mlmcRates} and $\lim_{\ell \to \infty} S_\ell< \infty$.
    Then the extended CLT
  condition~\eqref{eq:extendedClt} is satisfied for the normalized
  estimator.
\end{theorem}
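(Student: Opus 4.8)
The plan is to verify the reformulated Lindeberg condition~\eqref{eq:lindebergCond2} directly. This is legitimate here: when $\beta>\gamma$, Assumption~\ref{ass:mlmcRates2} imposes nothing beyond $V_0>0$, which is part of the hypothesis, so the corollary immediately preceding this theorem — which shows the extended CLT condition~\eqref{eq:extendedClt} to be equivalent to~\eqref{eq:lindebergCond2} — applies. The key structural fact is that $\beta>\gamma$ forces $\sqrt{V_\ell C_\ell}=\cO_\ell(e^{-(\beta-\gamma)\ell/2})$, so the series $\sum_{\ell\ge0}\sqrt{V_\ell C_\ell}$ converges; hence $S_L$ increases to a finite limit and, by $V_0>0$, satisfies $S_L\ge S_0=\sqrt{V_0 C_0}>0$ for every $L$. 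Accordingly, I would write the $\ell$-th summand of~\eqref{eq:lindebergCond2} as $w_\ell(\epsilon)\,\1{V_\ell>0}\,g_\ell(\epsilon)$, where $w_\ell(\epsilon)\defeq\sqrt{V_\ell C_\ell}/S_L\in[0,1]$ (so $\sum_{\ell=0}^L w_\ell(\epsilon)=1$) and $g_\ell(\epsilon)\defeq\E{Z_\ell\1{Z_\ell>\epsilon^2 M_\ell^2\nu/V_\ell}}$ with $Z_\ell\defeq\abs{\DlX-\E{\DlX}}^2/V_\ell\ge0$ satisfying $\E{Z_\ell}=1$; in particular $0\le g_\ell(\epsilon)\le1$.

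Next I would split the sum at a cutoff $K$ into a tail and a head. For the tail only the summability of the weights is needed: for any $K\in\bN_0$ and any $\epsilon$ small enough that $L>K$,
\[
\sum_{\ell=K+1}^{L} w_\ell(\epsilon)\,\1{V_\ell>0}\,g_\ell(\epsilon)\;\le\;\sum_{\ell=K+1}^{L} w_\ell(\epsilon)\;\le\;\frac{1}{S_0}\sum_{\ell=K+1}^{\infty}\sqrt{V_\ell C_\ell}\;\eqdef\;r(K),
\]
with $r(K)\to0$ as $K\to\infty$ (geometrically, in fact). For the head I would fix $K$ and treat each $\ell\le K$ with $V_\ell>0$ on its own: since $M_\ell\ge\epsilon^{-2}\sqrt{V_\ell/C_\ell}\,S_L\ge\epsilon^{-2}\sqrt{V_\ell/C_\ell}\,S_0$, the indicator threshold obeys $\epsilon^2 M_\ell^2\nu/V_\ell\ge(S_0^2\nu/C_\ell)\,\epsilon^{-2}\to\infty$ as $\epsilon\downarrow0$, hence $\1{Z_\ell>\epsilon^2 M_\ell^2\nu/V_\ell}\to0$ almost surely; as $Z_\ell\in L^1$, its tail contribution $g_\ell(\epsilon)$ tends to $0$ (dominated convergence, dominating function $Z_\ell$), and since $w_\ell(\epsilon)\le1$ the finite head sum $\sum_{\ell=0}^{K} w_\ell(\epsilon)\,\1{V_\ell>0}\,g_\ell(\epsilon)\to0$.

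Combining the two bounds, $\limsup_{\epsilon\downarrow0}$ of the sum in~\eqref{eq:lindebergCond2} is at most $r(K)$ for every $K$; letting $K\to\infty$ forces this $\limsup$ to be $0$, so~\eqref{eq:lindebergCond2} holds and the extended CLT condition~\eqref{eq:extendedClt} follows. I do not expect a serious obstacle here; the only points requiring care are the $\1{V_\ell>0}$ bookkeeping, so that levels with vanishing variance contribute nothing and no division by zero occurs, and the observation that the constants entering the head argument ($S_0>0$ and the integrability of $Z_\ell$) are genuinely fixed once $\ell$ is fixed — the geometric decay of $\sqrt{V_\ell C_\ell}$ is what supplies all the uniformity in $\epsilon$.
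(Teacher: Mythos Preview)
Your proof is correct. Both you and the paper exploit the same two facts: when $\beta>\gamma$ the sequence $S_L$ is trapped in $[S_0,S]$ with $S_0>0$ and $S<\infty$, and for each fixed $\ell$ the threshold $\epsilon^2M_\ell^2/V_\ell$ tends to infinity, so the tail integral $g_\ell(\epsilon)$ vanishes. The difference is in how the limit in $\epsilon$ is interchanged with the growing sum. The paper recasts the sum $\sum_{\ell=0}^{L}\sqrt{V_\ell C_\ell}\,g_\ell(\epsilon)$ as $\int_0^S f_\epsilon(x)\,dx$ for a step function $f_\epsilon$, and then applies dominated convergence on $[0,S]$; this forces the paper to verify $\min_{0\le\ell\le L}\epsilon^2M_\ell^2/V_\ell\to\infty$ uniformly in $\ell$ up to the moving endpoint $L(\epsilon)$, which in turn requires a case split on $\gamma<2\alpha$ versus $\gamma=2\alpha$ and, in the latter case, the auxiliary level $\widehat L$. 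Your head--tail decomposition sidesteps all of this: by freezing a finite cutoff $K$ you only need pointwise convergence of $g_\ell(\epsilon)$ for finitely many fixed $\ell$, while the summability of $\sqrt{V_\ell C_\ell}$ absorbs the tail uniformly. This is more elementary and shorter; the paper's integral representation is a legitimate alternative but gains nothing here.
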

Note that the setting $\beta >\gamma$ is completely
covered by Theorem~\ref{thm:betaLargest}, as then 
\[
  S \defeq \lim_{k \to \infty} S_k = \lim_{k \to \infty}
  \sum_{\ell=0}^k \sqrt{V_\ell C_\ell} \leq c \lim_{k \to \infty}
  \sum_{\ell =0}^k 2^{(\gamma-\beta)\ell/2} < \infty\;.
\]

\begin{proof}
 We prove this result by verifying that
  condition~\eqref{eq:lindebergCond2} holds. 

As the sequence ${\{S_\ell\}}$ is monotonically
increasing, it is contained in the bounded interval $[S_0, S]$ with
$S_0 >0$.  Consequently, Lindeberg's
condition~\eqref{eq:lindebergCond2} is equivalent to:
\[
\lim_{\epsilon \downarrow 0} \sum_{\ell=0}^L \1{V_\ell >0} \sqrt{\frac{C_\ell}{V_\ell}}
\E{\abs{\DlX - \E{\DlX}}^2
    \1{\abs{\DlX - \E{\DlX}}^2  > \epsilon^2 M_\ell^2 \nu}} =0\;,\quad\forall\,\nu>0\;.
\]
Fix a $\nu>0$. Then for all $\ell \in \bN_0$, 
\[
\E{\abs{\DlX - \E{\DlX}}^2
  \1{\abs{\DlX - \E{\DlX}}^2  > \epsilon^2 M_\ell^2 \nu}} \le V_\ell.
\]
By the preceding inequality and the summability of the sequence $\{V_\ell C_\ell\}$,
the dominated convergence theorem yields that 
\begin{equation}\label{eq:dct1}
\begin{split}
&\lim_{\epsilon \downarrow 0} \sum_{\ell=0}^L \1{V_\ell >0} \sqrt{\frac{ C_\ell}{V_\ell}} \E{\abs{\DlX - \E{\DlX}}^2
  \1{\abs{\DlX - \E{\DlX}}^2  > \epsilon^2 M_\ell^2 \nu}}\\
&= \sum_{\ell=0}^\infty  \1{V_\ell >0} \sqrt{\frac{ C_\ell}{V_\ell}}  \lim_{\epsilon \downarrow 0} \E{\abs{\DlX - \E{\DlX}}^2
    \1{\abs{\DlX - \E{\DlX}}^2  > \epsilon^2 M_\ell^2 \nu}}.
\end{split}
\end{equation}
For all $\ell \in \bN_0$ such that $V_\ell >0$,
\[
\lim_{\epsilon \downarrow 0} \epsilon^2M_\ell^2(\epsilon) \ge
\lim_{\epsilon \downarrow 0} \epsilon^{-2} \frac{V_{\ell}}{C_\ell} S_L^2 = \infty,
\]
and the dominated convergence theorem applies for all $\ell \in \bN_0$:
\begin{equation}\label{eq:dct2}
\begin{split}
& \1{V_\ell >0} \sqrt{\frac{ C_\ell}{V_\ell}}  \lim_{\epsilon \downarrow 0} \E{\abs{\DlX - \E{\DlX}}^2
    \1{\abs{\DlX - \E{\DlX}}^2  > \epsilon^2 M_\ell^2 \nu}}\\
& = \1{V_\ell >0} \sqrt{\frac{ C_\ell}{V_\ell}}   \E{ \lim_{\epsilon \downarrow 0} \abs{\DlX - \E{\DlX}}^2
    \1{\abs{\DlX - \E{\DlX}}^2  > \epsilon^2 M_\ell^2 \nu}}\\
  & = 0.
\end{split}
\end{equation}
As the above argument is valid for any fixed $\nu>0$,
equations~\eqref{eq:dct1} and~\eqref{eq:dct2} verify that 
Lindeberg's condition holds.

\end{proof}

We next verify the extended CLT condition for the variance minimizing
MLMC estimator in settings with $\lim_{\ell \to \infty} S_\ell = \infty$.

\begin{theorem}\label{thm:gammaLargest}
  Let $\cA_{ML}$ denote the variance minimizing MLMC estimator applied to estimate
  the expectation of $X \in L^2(\Omega)$ based on the collection of
  r.v.
  $\{X_\ell\} \subset L^2(\Omega)$ satisfying
  Assumptions~\ref{ass:mlmcRates} and~\ref{ass:mlmcRates2}.
  Assume that $\lim_{\ell \to \infty} S_\ell = \infty$
  and that 
  \[
 \lim_{\ell \to \infty} \1{V_\ell >0} \E{ \frac{ \abs{\DlX  - \E{\DlX}}^2}{ V_\ell}  \1{\frac{ \abs{\DlX  -
      \E{\DlX}}^2}{V_\ell} > 2^{(2\alpha - \gamma)\ell} S_{\ell}^2\nu}} = 0
 \]
 holds for any $\nu >0$.
 Then the extended CLT
 condition~\eqref{eq:extendedClt} is satisfied for the normalized
 MLMC estimator.
\end{theorem}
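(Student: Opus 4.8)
The plan is to verify Lindeberg's condition in the form~\eqref{eq:lindebergCond2}, which, by the corollary following Lemma~\ref{lem:varEpsRelation}, is equivalent to the extended CLT condition~\eqref{eq:extendedClt} under Assumptions~\ref{ass:mlmcRates} and~\ref{ass:mlmcRates2}. I would first dispose of the case $\beta>\gamma$: there the geometric bound $\sqrt{V_\ell C_\ell}\le c\,e^{(\gamma-\beta)\ell/2}$ forces $\lim_{k\to\infty} S_k<\infty$, so the assertion is already contained in Theorem~\ref{thm:betaLargest} (whose only hypothesis, $V_0>0$, is part of Assumption~\ref{ass:mlmcRates2}). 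It then remains to treat $\beta=\gamma$, in which case Assumption~\ref{ass:mlmcRates2}(i) guarantees $S_L\to\infty$ as $\epsilon\downarrow0$.

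The key step is a deterministic comparison, uniform over $0\le\ell\le L$, between the $\epsilon$-dependent truncation level $\epsilon^2 M_\ell^2/V_\ell$ in~\eqref{eq:lindebergCond2} and the $\epsilon$-free threshold $e^{(2\alpha-\gamma)\ell}S_\ell^2$ appearing in the hypothesis. Indeed, from~\eqref{eq:Ml} one has $M_\ell\ge\epsilon^{-2}\sqrt{V_\ell/C_\ell}\,S_L$, hence $\epsilon^2 M_\ell^2/V_\ell\ge\epsilon^{-2}S_L^2/C_\ell$ whenever $V_\ell>0$; using $C_\ell\le C_1 e^{\gamma\ell}$ (Assumption~\ref{ass:mlmcRates}(iii)), the bound $\epsilon^{-2}\ge c_2 e^{2\alpha L}$ for all small $\epsilon$ (from the definition of $L$), and $S_L\ge S_\ell$ together with $L\ge\ell$, one obtains a constant $c>0$ such that $\epsilon^2 M_\ell^2/V_\ell\ge c\,e^{(2\alpha-\gamma)\ell}S_\ell^2$ for every $0\le\ell\le L$ and every sufficiently small $\epsilon>0$. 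Each summand in~\eqref{eq:lindebergCond2} is therefore bounded above by
\[
a_\ell(\nu)\defeq\1{V_\ell>0}\,\E{\frac{\abs{\DlX-\E{\DlX}}^2}{V_\ell}\1{\frac{\abs{\DlX-\E{\DlX}}^2}{V_\ell}>c\nu\,e^{(2\alpha-\gamma)\ell}S_\ell^2}},
\]
and the hypothesis of the theorem (applied with $c\nu$ in place of $\nu$) gives $\lim_{\ell\to\infty}a_\ell(\nu)=0$, while trivially $0\le a_\ell(\nu)\le1$.

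What remains is the summability statement $\tfrac1{S_L}\sum_{\ell=0}^L\sqrt{V_\ell C_\ell}\,a_\ell(\nu)\to0$ as $\epsilon\downarrow0$. Since the weights $\sqrt{V_\ell C_\ell}/S_L$ are nonnegative with $\sum_{\ell=0}^L\sqrt{V_\ell C_\ell}/S_L=1$, and since $a_\ell(\nu)\to0$ and $S_L\to\infty$, this follows by a Toeplitz/Ces{\`a}ro argument: given $\delta>0$, choose $N$ with $a_\ell(\nu)<\delta$ for $\ell\ge N$, split the sum at $N$, estimate the head by $S_{N-1}/S_L$ (using $a_\ell\le1$) and the tail by $\delta$ (using $\sum_{\ell\ge N}\sqrt{V_\ell C_\ell}/S_L\le1$), let $\epsilon\downarrow0$ so that $S_{N-1}/S_L\to0$, and finally let $\delta\downarrow0$. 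As $\nu>0$ was arbitrary, condition~\eqref{eq:lindebergCond2} holds, completing the proof.

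I expect the main obstacle to be the uniform threshold comparison of the second paragraph: the exponent bookkeeping must be arranged so that the entire $\epsilon$-dependence of $\epsilon^2 M_\ell^2/V_\ell$ is absorbed into a universal constant and into the $\ell$-indexed quantity $e^{(2\alpha-\gamma)\ell}S_\ell^2$ that the hypothesis controls. Once this is in place, the reduction to Theorem~\ref{thm:betaLargest} when $\beta>\gamma$ and the Ces{\`a}ro argument when $\beta=\gamma$ are both routine.
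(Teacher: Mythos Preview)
Your argument is correct and follows essentially the same route as the paper: establish the uniform threshold comparison $\epsilon^2 M_\ell^2/V_\ell\ge c\,e^{(2\alpha-\gamma)\ell}S_\ell^2$ from~\eqref{eq:Ml} and $C_\ell=\Theta_\ell(e^{\gamma\ell})$, then conclude via a Toeplitz-type averaging of the resulting null sequence against the weights $\sqrt{V_\ell C_\ell}/S_L$. The only cosmetic differences are that you split off the case $\beta>\gamma$ (reducing it to Theorem~\ref{thm:betaLargest}) and carry out the Ces\`aro step by hand, whereas the paper treats all subcases at once by invoking the Silverman--Toeplitz theorem for the matrix $a_{k\ell}=\1{\ell\le k}\sqrt{V_\ell C_\ell}/S_k$.
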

\begin{proof}
  From~\eqref{eq:Ml} and $C_\ell = \Theta_\ell(2^{\gamma \ell})$ it
  follows that there exists a $c>0$ such that
  \[
  \frac{\epsilon^2 M_\ell^2}{V_\ell} \ge \frac{\epsilon^{-2}S_\ell^2}{C_\ell} > c 2^{(2\alpha - \gamma)\ell} S_{\ell}^2.
  \]
  Consequently,
  \[
  \begin{split}
  &\sum_{\ell=0}^L  \frac{\sqrt{V_\ell C_\ell}}{S_L}  \E{\frac{\abs{\DlX - \E{\DlX}}^2}{V_\ell}
    \1{\frac{\abs{\DlX - \E{\DlX}}^2 }{V_\ell} > \frac{\epsilon^2 M_\ell^2}{V_\ell} \nu}}\\
  &\le \sum_{\ell=0}^L  \frac{\sqrt{V_\ell C_\ell}}{S_L}  \E{\frac{\abs{\DlX - \E{\DlX}}^2}{V_\ell}
    \1{\frac{\abs{\DlX - \E{\DlX}}^2 }{V_\ell} >  \nu c 2^{(2\alpha - \gamma)\ell} S_{\ell}^2}}.
  \end{split}
  \]
  Let $\widetilde L : (0,\infty) \to \bN_0$ be a monotonically
  decreasing function satisfying the constraints 
  \[
  \lim_{\epsilon \downarrow 0} \widetilde{L}(\epsilon)
  = \infty \quad \text{and} \quad \lim_{\epsilon \downarrow 0} \frac{S_{\widetilde L(\epsilon)}}{S_{L(\epsilon)}}
  = 0.
  \]
  Under the current assumption $\lim_{\epsilon \downarrow 0} S_{L(\epsilon)} = \infty$,
  it is always possible to construct such an $\widetilde L$, e.g.,
  \[
  \widetilde L(\epsilon) := \min \left\{\ell \in \bN_0 \mid
  S_{\ell+1} \ge \sqrt{S_{L(\epsilon)}}\right\}.
  \]
  Provided that $\epsilon >0$ is sufficiently small,
  it holds that $\widetilde L < L$ and we may write 
  \[
  \begin{split}
    &\sum_{\ell=0}^L \frac{\sqrt{V_\ell C_\ell}}{S_L}
    \E{\frac{\abs{\DlX - \E{\DlX}}^2}{V_\ell} \1{\frac{\abs{\DlX -
            \E{\DlX}}^2 }{V_\ell} > \nu c 2^{(2\alpha - \gamma)\ell}
        S_{\ell}^2}}\\
    &\le \sum_{\ell=0}^{\widetilde L} \frac{\sqrt{V_\ell C_\ell}}{S_L}
    + \sum_{\ell=\widetilde L +1}^L \frac{\sqrt{V_\ell C_\ell}}{S_L}
    \E{\frac{\abs{\DlX - \E{\DlX}}^2}{V_\ell} \1{\frac{\abs{\DlX -
            \E{\DlX}}^2 }{V_\ell} > \nu c 2^{(2\alpha - \gamma)\ell}
        S_{\ell}^2}}\\
    &\le \frac{S_{\widetilde L}}{S_{L}} + \frac{S_L- S_{\widetilde L}}{S_{L}}
    \times \sup_{\ell > \widetilde L}
    \E{\frac{\abs{\DlX - \E{\DlX}}^2}{V_\ell} \1{\frac{\abs{\DlX -\E{\DlX}}^2 }{V_\ell} > \nu c 2^{(2\alpha - \gamma)\ell}S_\ell^2}}.
  \end{split}
  \]
  Consequently,
  \[
  \begin{split}
  & \lim_{\epsilon \downarrow 0} \sum_{\ell=0}^L  \frac{\sqrt{V_\ell C_\ell}}{S_L}  \E{\frac{\abs{\DlX - \E{\DlX}}^2}{V_\ell}
    \1{\frac{\abs{\DlX - \E{\DlX}}^2 }{V_\ell} > \frac{\epsilon^2 M_\ell^2}{V_\ell} \nu}} \\
  & \le \lim_{\epsilon \downarrow 0} \frac{S_{\widetilde L}}{S_{L}}
  + \limsup_{\ell \to \infty} \E{\frac{\abs{\DlX - \E{\DlX}}^2}{V_\ell} \1{\frac{\abs{\DlX -\E{\DlX}}^2 }{V_\ell} > \nu c 2^{(2\alpha - \gamma)\ell}S_\ell^2}}\\
  &= 0.
  \end{split}
  \]

\end{proof}






  

\subsection{CLT for the mass-shifted MLMC estimator}
\label{sec:massShiftProof}
The key feature of the mass-shifted MLMC estimator that is
particularly handy for proving the CLT is that irrespective of
whether $\{ S_{\ell}\}$ is uniformly bounded from above or not,
it will always be the case that $\lim_{\ell \to \infty} \widetilde S_{\ell} < \infty$.
The CLT follows by this property and an extension of Theorem~\ref{thm:betaLargest}.

\begin{proof}[Proof of Theorem~\ref{thm:mainResult2}]
  Recall that the mass-shifted MLMC estimator is given by
  \[
  \widetilde \cA_{ML} = \sum_{\ell=0}^{L} \sum_{i=1}^{\widetilde M_\ell} \frac{\DlX^i}{ \widetilde M_\ell},
  \]
  where $\widetilde M_\ell$ for a given $\xi >0$ is defined in equation~\eqref{eq:mlTilde}
  and $\{\Delta_\ell X\}$ is a sequence of r.v.~satisfying Assumption~\ref{ass:mlmcRates}
  for a rate triplet $\alpha,\beta,\gamma$.
  Let $\{Y_\ell\}_{\ell=-1}^\infty \subset L^2(\Omega)$ denote
  a auxiliary sequence satisfying $Y_{-1} :=0$ and for all $\ell \ge 0$,
  \[
  Y_\ell \stackrel{\rm{d}}{=} X_\ell, \qquad \Delta_\ell Y \stackrel{\rm{d}}{=} \Delta_\ell X,
  \]
  and 
  \[
  \begin{split}
  \overline C_\ell := \mathrm{Cost}(\Delta_\ell Y) = \frac{\mathrm{Cost}(\Delta_\ell X)}{(S_\ell+1)^2 \log(S_\ell + 1)^{2(1+\xi)}}
  = \frac{C_\ell }{(S_\ell+1)^2 \log(S_\ell + 1)^{2(1+\xi)}}.
  \end{split}
  \]
  Let $\cA_{ML}$ denote the variance minimizing MLMC estimator applied to $\{Y_\ell\}_{\ell=-1}^\infty$,
  i.e.,
  \begin{equation}\label{eq:aMLY}
  \cA_{ML} = \sum_{\ell=0}^L \sum_{i=1}^{M_\ell} \frac{\Delta_\ell Y^i}{M_\ell},
  \end{equation}
  where it follows by $\Var{\Delta_\ell Y} = \Var{\Delta_\ell X} = V_\ell$ and equation~\eqref{eq:Ml} that
  \[
  M_\ell = \max\pr{ \ceil{\epsilon^{-2} \sqrt{\frac{V_\ell}{\overline C_\ell}} \sum_{\ell=0}^L \sqrt{V_\ell \overline C_\ell} }, 1 }.
  \]
  By construction,
  \[
  \sum_{\ell=0}^L \sqrt{V_\ell \overline C_\ell} = \widetilde  S_L, 
  \]
  hence, $M_\ell = \widetilde M_\ell$ for all $\ell \in [0, L]$.
  Consequently, $\cA_{ML} \stackrel{\rm{d}}{=} \widetilde \cA_{ML}$, so the
  theorem follows if we can prove the CLT for the normalized version of $\cA_{ML}$.
      
  The collection of random variables
  $\{Y_\ell\}$ satisfies the following slightly altered version
  of Assumption~\ref{ass:mlmcRates} (where $\Theta_\ell(2^{\gamma \ell})$ is replaced
  by $\cO_\ell(2^{\bar \gamma \ell})$ in condition (iii)):
\begin{itemize}
 
 \item[(i)] for some $c_\alpha>0$, $\abs{ \E{X - Y_\ell} }   \le c_\alpha 2^{-\alpha \ell}$ for all $\ell\ge0$,
 
 \item[(ii)] $\text{Var}(\Delta_\ell Y) = \cO_\ell(2^{-\beta \ell})$,
   
 \item[(iii)] $\overline{C}_\ell = \cO_\ell(2^{\bar \gamma \ell})$ and $\inf_{\ell \in \bN_0} \overline{C}_\ell > c>0$,
\end{itemize}
where $\bar \gamma \in (0, \gamma ]$ and 
$\alpha,\beta,\gamma >0$ (as everywhere else in this proof)
stems from the rate triplet of $\{X_\ell\}$.
Moreover,
\[
\min(\beta,\gamma) \le 2 \alpha \implies \min(\beta,\bar \gamma) \le 2 \alpha,
\]
and since $\{S_\ell\}$ is monotonically increasing,
  \[
  \begin{split}
    \widetilde  S_L &= \sum_{\ell=0}^L \frac{\sqrt{V_\ell  C_\ell} }{(S_\ell+1) \log(S_\ell + 1)^{1+\xi}}\\
    & =  \sum_{\ell=0}^L \frac{S_\ell - S_{\ell-1} }{(S_\ell+1) \log(S_\ell + 1)^{1+\xi}} \\
    & \le   \int_{S_0}^{S_L} \frac{1}{ (s+1)\log(s+1)^{1+\xi}} ds \\
    & < \frac{1}{\xi\log(S_0+1)^{\xi}} < \infty.
    \end{split}
  \]
This shows that $\widetilde S_\ell \in [\widetilde S_0, \widetilde S]$
for all $\ell \ge 0$, 
where $\widetilde S_0 = V_0 \overline C_0>0$
and $\widetilde S =  \lim_{\ell \to \infty} \widetilde S_\ell<\infty$.
Using the uniform bounds on $\{\widetilde S_\ell\}$ and the properties
of the rate triplet for $\{Y_\ell\}$,
the proofs of Lemma~\ref{lem:varEpsRelation}, Corollary~\ref{cor:extCLT}
and Theorem~\ref{thm:betaLargest} straightforwardly extends
to the current setting, verifying the CLT for
the normalized version of the estimator~\eqref{eq:aMLY}. 
\end{proof}

\section*{References}


\end{document}